\documentclass[11pt]{amsart}
\usepackage[parfill]{parskip}    % Activate to begin paragraphs with an empty line rather than an indent
\usepackage{amssymb, amsthm, amsmath, mathrsfs}
\usepackage{color}
\usepackage{soul}
\usepackage{url}
\usepackage{pinlabel}
\usepackage{caption}
\usepackage{subcaption}

%%%%%%%%%%%%%%%
%BEGIN make cleaner under/overbraces 
\usepackage{mathtools}  

\usepackage{tikz}  
\usetikzlibrary{decorations.pathreplacing}
\makeatletter
\def\underbrace#1{%
  \@ifnextchar_{\tikz@@underbrace{#1}}{\tikz@@underbrace{#1}_{}}}
\def\tikz@@underbrace#1_#2{%
  \tikz[baseline=(a.base)] {\node[inner sep=2] (a) {\(#1\)};
  \draw[line cap=round,decorate,decoration={brace,amplitude=4pt}]
    (a.south east) -- node[pos=0.5,below,inner sep=7pt] {\(\scriptstyle #2\)} (a.south west);}}

\def\overbrace#1{%
  \@ifnextchar^{\tikz@@overbrace{#1}}{\tikz@@overbrace{#1}^{}}}
\def\tikz@@overbrace#1^#2{%
  \tikz[baseline=(a.base)] {\node[inner sep=2] (a) {\(#1\)};
  \draw[line cap=round,decorate,decoration={brace,amplitude=4pt}]
    (a.north west) -- node[pos=0.5,above,inner sep=7pt] {\(\scriptstyle #2\)} (a.north east);}}
\makeatother    
%END make cleaner under/overbraces 
%%%%%%%%%%%%%%%

%\setlength{\parindent}{0pt} 			% No paragraph indentation
%\setlength{\parskip}{2ex}				% Line skip between paragraphs

% A note on the pinlabel package: the way it is set up, it reads the image from the PDF and it reads the bounding box from the EPS. You need to export both for it to work, for whatever reason.

 %Theorems
\theoremstyle{plain}
\newtheorem{theorem}{Theorem}

\newtheorem{claim}[theorem]{Claim}

\newtheorem{conjecture}[theorem]{Conjecture}

\newtheorem{lemma}[theorem]{Lemma}
\newtheorem{proposition}[theorem]{Proposition}

\newtheorem{question}[theorem]{Question}

\newtheorem*{conjectureA}{Conjecture~\ref{conj:essconway}}
\newtheorem*{conjectureB}{Conjecture~\ref{conj:nstringprime}}

\theoremstyle{definition}

\newtheorem{remark}[theorem]{Remark}

%General macros

\newcommand{\Z}{\mathbb{Z}}

\newcommand{\bji}{b_j^{(i)}}
\newcommand{\aji}{a_j^{(i)}}

\newcommand{\bdry}{\partial}
\newcommand{\nbhd}{\mathcal{N}}
%Operator macros

\DeclareMathOperator{\rk}{rk}

\title{Montesinos knots, Hopf plumbings, and L-space surgeries}
\author{Kenneth L.\ Baker and Allison H.\ Moore}
\address{Department of Mathematics\\University of Miami\\\newline Coral Gables, FL 33146 \\ USA}
\email{k.baker@math.miami.edu}
\address{Department of Mathematics\\Rice University\\\newline Houston, TX 77005 \\ USA}
\email{allison.h.moore@rice.edu}

\date{}                                           % Activate to display a given date or no date

\begin{document}

\begin{abstract}
Using Hirasawa-Murasugi's classification of fibered Montesinos knots we classify the L-space Montesinos knots, providing further evidence towards a conjecture of Lidman-Moore that L-space knots have no essential Conway spheres.  In the process, we classify the fibered Montesinos knots whose open books support the tight contact structure on $S^3$.  We also construct L-space knots with arbitrarily large tunnel number and discuss the question of whether L-space knots admit essential tangle decompositions in the context of satellite operations and tunnel number.
\end{abstract}

\maketitle

%%%%%%%%%%%%%%%%%%%%%%%
%%%%%%%  INTRO  %%%%%%%%%%%
%%%%%%%%%%%%%%%%%%%%%%%

\section{Introduction}
From the algebraic viewpoint of Heegaard Floer homology, {\em L-spaces} are the ``simple'' $3$--manifolds.  They are the rational homology spheres with rank as small as possible, i.e.\ the manifolds $Y$ such that $\rk \widehat{HF}(Y) = | H_1(Y;\Z) |$.  These include $S^3$, the lens spaces (except $S^1 \times S^2$), the other elliptic manifolds, many Seifert fibered spaces, as well as many hyperbolic manifolds \cite{OS-lens}.   
One way to construct examples of L-spaces is through ``bootstrapping'' a known L-space surgery on a knot.  
It follows from the Heegaard Floer surgery exact triangle that if $r>0$ surgery on a knot $K$ in $S^3$ is an L-space, then for every number $s\geq r$ the result of $s$--surgery on $K$ is also an L-space \cite{OS-lens}.  Capturing this, a knot $K$ in $S^3$ admitting a {\em positive} Dehn surgery to an L-space is known as an {\em L-space knot}.   

So which knots are L-space knots? This question would be answered with a geometric characterization.   Already established are the fiberedness \cite{ni} and support of the tight contact structure \cite[Corollary~1.4 with Proposition~2.1]{hedden} for L-space knots.   Using these properties, the structure of the Alexander polynomial of an L-space knot  \cite{OS-lens}, and the determinant-genus inequality for L-space knots \cite[Lemma~5]{lidmanmoore} we further probe a conjecture about geometric decompositions of L-space knots.
 
\begin{conjectureA}[Lidman-Moore \cite{lidmanmoore}]%\label{conj:essconway}
An L-space knot has no essential Conway sphere.
\end{conjectureA}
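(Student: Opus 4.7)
The plan is to attack the conjecture family by family rather than in one stroke, since a full proof seems to require a geometric characterization of L-space knots that is not currently available. Montesinos knots are a natural testbed: every Montesinos knot whose tangle decomposition has length at least three admits an essential Conway sphere, namely the sphere separating two neighboring rational tangles from the rest. Proving the conjecture for Montesinos knots therefore amounts to classifying the L-space Montesinos knots and observing that every such knot either has length at most two (hence is two-bridge or a connected sum and carries no essential Conway sphere) or lies in a short sporadic list such as the $(-2,3,2n+1)$ pretzels.

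To execute this Montesinos classification I would start by combining Ni's fiberedness theorem with Hedden's result that the open book of an L-space knot supports the tight contact structure on $S^3$. Applied to Hirasawa--Murasugi's combinatorial description of fibered Montesinos knots, these two facts cut the admissible tangle parameters down to a computable list. I would then eliminate candidates with two further obstructions: the Ozsv\'ath--Szab\'o constraint that the Alexander polynomial of an L-space knot has all nonzero coefficients equal to $\pm 1$ with alternating signs, and the Lidman--Moore determinant-genus inequality $\det(K)\le 2g(K)+1$. Because the genus of a Montesinos knot can be read off the Hirasawa--Murasugi fiber surface and the determinant is a direct function of the rational parameters, these bounds should reduce matters to finite case analysis, after which the surviving knots can be checked to have known L-space surgeries and no essential Conway spheres.

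The hardest part is of course the conjecture outside the Montesinos world. No Alexander polynomial obstruction can suffice on its own, since one may form satellites with companions whose Alexander polynomial is trivial, so the argument must be genuinely geometric. One promising route is to analyze how an essential Conway sphere $F$ interacts with the fibration guaranteed by Ni's theorem: $F$ meets the fiber in an essential multicurve, decomposing the monodromy into tangle-level data, and one would hope to show that the right-veering condition forced by Hedden's theorem is incompatible with a nontrivial two-string tangle decomposition. Making this precise appears to require a serious sutured-manifold or open-book analysis, and it is this geometric step, rather than any part of the Montesinos reduction, that I expect to be the real obstacle.
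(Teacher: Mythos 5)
There is a genuine gap, and it begins with the status of the statement itself: in the paper this is a \emph{conjecture}, not a theorem. The paper never proves it; it only supplies evidence by classifying L-space Montesinos knots (Theorem~\ref{thm:main}) via exactly the pipeline you describe --- Hirasawa--Murasugi's fibered classification, Ni's fiberedness, Hedden's tightness/strong quasipositivity, the determinant--genus inequality, and the Alexander polynomial constraint. So your Montesinos plan is essentially the paper's actual argument for its main theorem, and it is sound as far as it goes; but your closing paragraph, where the full conjecture would have to be proved, is not an argument. The proposed interaction between an essential Conway sphere and the fibration, and the hoped-for incompatibility with right-veering monodromy, is a program with no identified mechanism, and you concede as much. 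A proof attempt that ends in ``this appears to require a serious sutured-manifold or open-book analysis'' has not proved the statement; the conjecture remains open both in the paper and in your proposal.

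There is also a concrete factual error in your reduction. You assert that every Montesinos knot of length at least three admits an essential Conway sphere. The correct threshold is length at least \emph{four}: for $r\geq 4$ a sphere separating two adjacent rational tangles from the remaining $r-2$ is essential, but for $r=3$ such a sphere bounds a single rational tangle on one side and is boundary-compressible there. This is not a cosmetic slip --- the L-space Montesinos knots produced by the classification are the length-three pretzels $P(-2,3,2n+1)$, so if your length-$\geq 3$ claim were true these knots would themselves be counterexamples to the very conjecture you are trying to support, contradicting your own ``sporadic list'' sentence. The paper is careful to state the essential-Conway-sphere claim only for $r\geq 4$, which is why Theorem~\ref{thm:main} is consistent with, and lends evidence to, Conjecture~\ref{conj:essconway} rather than refuting it.
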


To do so, we first extend the results and techniques of \cite{lidmanmoore} to obtain a classification of the L-space knots among the Montesinos knots.
\begin{theorem}\label{thm:main}
Among the Montesinos knots, the only  L-space knots are the pretzel knots  $P(-2,3,2n+1)$ for $n\geq0$ and the torus knots $T(2,2n+1)$ for $n\geq0$.
\end{theorem}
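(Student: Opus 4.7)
The plan is to combine the four known necessary conditions for a knot in $S^3$ to be an L-space knot — fiberedness, open book supporting the tight contact structure $\xi_{std}$ on $S^3$, the Ozsv\'ath-Szab\'o alternating-sign constraint on the Alexander polynomial, and the Lidman-Moore determinant-genus inequality $\det(K) \leq 2g(K)+1$ — and apply them to the Hirasawa-Murasugi classification of fibered Montesinos knots, cutting that list down to the two families in the statement.  By Ni's theorem any L-space knot is fibered, so this first condition already confines us to the Hirasawa-Murasugi list.  That list partitions fibered Montesinos knots into a short collection of families indexed by the rational tangle parameters $\beta_i/\alpha_i$, and in each case exhibits the fiber as an explicit plumbing of Hopf bands whose cores are the curves along which the monodromy Dehn-twists.

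Next, I would apply Hedden's criterion to each family.  Because the fiber is a Hopf plumbing, the open book supports $\xi_{std}$ exactly when every plumbed band is positive, so the test translates into a sign/parity condition on the rational parameters that should eliminate entire families at once.  This step has the independent payoff, advertised in the abstract, of classifying the fibered Montesinos knots whose open books support the tight contact structure on $S^3$.

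On the surviving families I would then invoke the determinant-genus inequality together with the Alexander polynomial constraint.  Since $\det(K)$, $g(K)$, and $\Delta_K(t)$ are all readable off the rational tangle parameters of a Montesinos knot, these constraints become arithmetic inequalities in the $\beta_i$'s and $\alpha_i$'s; pushing the arithmetic should force the only survivors to be $T(2,2n+1)$ and $P(-2,3,2n+1)$.  Both are classical L-space knots, via the lens-space surgeries of Moser and of Fintushel-Stern respectively, completing the classification.

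The main obstacle will be the case analysis in the middle two steps: the Hirasawa-Murasugi list has several qualitatively distinct fiber-surface shapes, and for each I must read off the monodromy accurately enough both to check positivity of every Dehn twist and to extract the Alexander polynomial and determinant.  Since fibered knots automatically have monic $\Delta_K$ of degree $2g(K)$, the real bite of the Alexander polynomial filter comes from the alternating $\pm 1$ condition on successive coefficients, and verifying exactly when this fails across the various families — without inadvertently excluding the two families we want to keep — is where the bookkeeping will be most delicate.
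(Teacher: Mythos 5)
Your overall strategy is the same as the paper's (fiberedness via Ni, tightness via Hedden, then determinant--genus and Alexander polynomial arithmetic on the Hirasawa--Murasugi families), but the middle step as you describe it contains a genuine gap. Hirasawa--Murasugi do \emph{not} present the fibers of fibered Montesinos knots as explicit plumbings of Hopf bands with the monodromy given as twists along band cores; they describe the fiber combinatorially via (strict or even) continued fraction expansions of the $\beta_i/\alpha_i$ together with the integer $e$. Consequently your proposed test --- ``the open book supports $\xi_{std}$ exactly when every plumbed band is positive'' --- is not directly available: there is no given list of plumbed bands whose signs you can read off. The actual work is to locate deplumbable negative Hopf bands inside these surfaces by hand (this is the content of the case analysis in Claims~\ref{claim:aij1} and \ref{claim:aij2} and Proposition~\ref{prop:eventight}), and, crucially, one even-type subfamily (the $e=0$ case) contains no evident negative Hopf band at all; the paper excludes it using Yamamoto's twisting-loop criterion (a $0$--framed essential curve, i.e.\ a Stallings twist, forces overtwistedness --- Lemma~\ref{lem:OTmarkers}), a tool your outline never invokes and for which ``check the sign of each band'' has no substitute. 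Symmetrically, for the surviving families the tightness of the supported contact structure is not automatic either: one must exhibit their fibers explicitly as iterated \emph{positive} Hopf plumbings (done in Propositions~\ref{prop:correctform} and \ref{prop:eventight} via explicit deplumbing sequences), since Hedden's criterion requires a positive certificate, not merely the absence of an obvious negative band.

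The remaining steps of your plan are sound and match the paper: on the odd-type survivors the determinant--genus inequality reduces to arithmetic in the $d_i$, leaving a single candidate killed by the Alexander polynomial condition, and on the even-type survivors one can either run the same arithmetic (as in the paper's remark) or observe that they are pretzel knots $P(m_1+1,\dots,m_r+1,-1,\dots,-1)$ and quote the Lidman--Moore classification of L-space pretzel knots. Two smaller points to tighten: you should treat the length $r\le 2$ (two-bridge) Montesinos knots separately, which is where the $T(2,2n+1)$ family arises (via alternating-knot results or the linear Hopf-plumbing description), and the fact that every $P(-2,3,2n+1)$, $n\ge 0$, is an L-space knot needs a reference covering the whole family (e.g.\ their Seifert fibered positive surgeries), not just Fintushel--Stern's lens space surgeries on $P(-2,3,7)$.
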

 It turns out that generalizing from pretzel knots to Montesinos knots yields no new L-space knots than the ones already obtained in \cite{lidmanmoore} and thus continues to support Conjecture~\ref{conj:essconway}.  
 
 As a byproduct of our proof, we obtain a classification of fibered Montesinos knots that support the tight contact structure.  The statement of the following theorem uses the conventions of Hirasawa-Murasugi for the notation of Montesinos links \cite{HM-monty} which we review at the beginning of section~\ref{sec:monty}.

\begin{theorem}\label{thm:fibered}
A fibered Montesinos knot that supports the tight contact structure is isotopic to either
\begin{itemize}
\item $M(\tfrac{-d_1}{2d_1+1},\dots, \tfrac{-d_r}{2d_r+1} \big|1 )$ for some set of positive integers $d_1, \dots, d_r$ such that $d_1 + \dots + d_r$ is even, or
\item $M( \tfrac{-m_1}{m_1+1}, \dots, \tfrac{-m_r}{m_r+1} \big| 2 )$ for some odd integer $m_1 \geq 1$ and even integers $m_2, \dots, m_r \geq 2$.
\end{itemize}
Moreover, the knot with its fiber may be obtained from the disk by a sequence of Hopf plumbings.
\end{theorem}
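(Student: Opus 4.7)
The plan is to combine Hirasawa-Murasugi's normal form for fibered Montesinos links with the characterization, via the Giroux correspondence together with Hedden's theorem, of open books on $S^3$ supporting the tight contact structure as those whose page can be built from a disk by a sequence of positive (right-handed) Hopf plumbings. Under this strategy the ``moreover'' clause is essentially a restatement of the characterization, and the classification reduces to deciding which Hirasawa-Murasugi fibers are built entirely from positive Hopf bands.

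First I would recall the Hirasawa-Murasugi theorem, which produces, for each fibered Montesinos link, a normal form $M(e_1/\alpha_1,\ldots,e_r/\alpha_r \mid e)$ together with an explicit fiber surface expressed as a Murasugi sum of surfaces associated with each rational tangle, joined by Hopf bands controlled by the framing $e$. Each rational tangle surface is itself a sequence of plumbings of Hopf bands whose signs are dictated by the signs appearing in a continued fraction expansion of $e_i/\alpha_i$.

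Next I would invoke the tight-support criterion: a fibered knot in $S^3$ has an open book supporting the tight contact structure if and only if its fiber can be obtained from a disk by a sequence of positive Hopf plumbings. In one direction, this follows from the fact that a positive Hopf plumbing realizes a positive stabilization of the open book, which preserves the supported contact structure, together with the fact that the unknot with disk fiber supports the tight contact structure. In the other direction it follows from Hedden's theorem (the monodromy of a fibered knot in $S^3$ supporting tight is a product of positive Dehn twists) combined with the Giroux correspondence and a Giroux-Goodman-type identification of such monodromies with positive plumbing constructions.

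The heart of the argument is then a case analysis of the Hirasawa-Murasugi list: for each normal form, determine which parameter choices force every Hopf band in the plumbing decomposition to be positive. For the $|e|=1$ family, only tangles of the form $-d_i/(2d_i+1)$ contribute purely positive plumbings, and the parity condition $d_1+\cdots+d_r$ even is needed so that the Hopf band realizing the framing $e$ has the correct (positive) sign relative to the others; I would verify that relaxing the parity forces at least one negative Hopf band, producing an overtwisted (hence non-tight on $S^3$) open book. The analogous analysis for the $|e|=2$ family yields the conditions that $m_1$ is odd and $m_2,\ldots,m_r$ are even. The principal obstacle is the sign bookkeeping: carefully tracking, through continued fraction expansions and the Murasugi-sum structure of the Hirasawa-Murasugi fiber, the sign of each Hopf band so that the stated parity conditions are seen to be exactly necessary and sufficient for all signs to be positive.
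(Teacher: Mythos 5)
Your proposal hinges on a ``tight-support criterion'' that is not actually available: the claim that a fibered knot in $S^3$ supports the tight contact structure \emph{only if} its fiber is obtained from the disk by a sequence of positive Hopf plumbings. Hedden's theorem says a fibered knot supports the tight structure iff it is strongly quasipositive (equivalently $\tau(K)=g(K)$); it does not give a positive factorization of the monodromy, and the Giroux--Goodman result only provides a \emph{common positive stabilization} of the two open books, not a deplumbing of the given fiber down to a disk. Whether every fibered knot supporting the tight contact structure on $S^3$ is a positive Hopf plumbing on the disk is essentially an open question (and the analogous ``tight implies positive monodromy factorization'' statement fails for general open books). So the step that reduces the classification to ``decide which Hirasawa--Murasugi fibers are built entirely from positive Hopf bands'' has no valid necessity direction. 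The paper avoids this entirely: necessity is obtained by exhibiting concrete \emph{overtwistedness certificates} inside the explicit Hirasawa--Murasugi fiber surfaces --- a deplumbable negative Hopf band (Claims~\ref{claim:aij1} and \ref{claim:aij2}, and the $e=-2$ even case) or, in the even-type $e=0$ case, a twisting loop in the sense of Yamamoto --- and only then is sufficiency proved by explicitly deplumbing the surviving families down to a single positive Hopf band.

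A secondary but genuine error: you attribute the parity conditions ($d_1+\dots+d_r$ even; $m_1$ odd and $m_2,\dots,m_r$ even) to sign bookkeeping of the Hopf band realizing the framing $e$. In fact these conditions have nothing to do with tightness; they are exactly the conditions for the Montesinos \emph{link} in question to be a \emph{knot} (one component) rather than a two-component link, as the paper checks by identifying $M(-d_1,\dots,-d_r\,\vert\,1)$ with a $T(2,N)$ torus link and by the standard ``at most one $\alpha_i$ even'' normalization in the even type. Relaxing the parity does not produce a negative Hopf band, so the verification you sketch there would fail. To repair the proposal you would need to replace the unavailable criterion with the paper's logic: use the Hirasawa--Murasugi strict/even continued-fraction normal forms, locate negative Hopf bands or twisting loops to exclude all other parameters, and then prove the two remaining families support the tight structure by explicit positive Hopf plumbing.
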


These two families of fibered Montesinos knots are illustrated with their fibers in Figure~\ref{fig:posfiberedmonty}.
\begin{figure}[h]
	\centering
                	\includegraphics[height=1in]{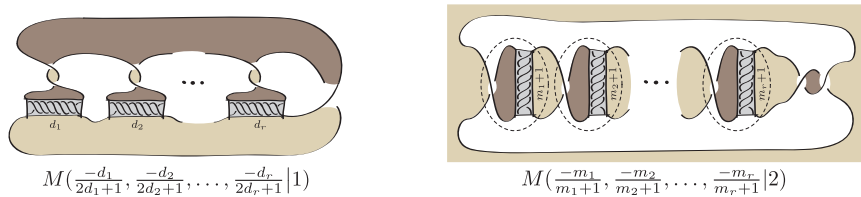} 
		\caption{The two families of fibered Montesinos knots that support the tight contact structure.}
    	\label{fig:posfiberedmonty}
\end{figure}

These two theorems will be proven in section~\ref{sec:mainproofs} with a discussion of the general strategy in section~\ref{sec:strategy}.    Lemma~\ref{lem:twobridge} recalls the corresponding result for two-bridge knots.  Then our arguments split according to Hirasawa-Murasugi's partition of Montesinos knots into odd and even types.

\begin{proof}[Proof of Theorem~\ref{thm:fibered}]
Lemma~\ref{lem:twobridge} handles two-bridge knots.
Proposition~\ref{prop:correctform} produces the first family for odd type Montesinos knots;  Proposition~\ref{prop:eventight} produces the second family for even type Montesinos knots.
\end{proof}

\begin{proof}[Proof of Theorem~\ref{thm:main}]
Lemma~\ref{lem:twobridge} shows the L-space two-bridge knots are the $T(2,2n+1)$ torus knots for $n\geq0$. We then restrict attention to Montesinos knots of length at least $3$ (since those of shorter length are two-bridge).
Then, among these knots, Proposition~\ref{prop:nooddlspace} shows there are no L-space knots of odd type and Proposition~\ref{prop:evenLspace} shows that those of even type are the pretzel knots $P(-2, 3, 2n+1)$ for integers $n\geq0$.
\end{proof}

Thereafter, in section~\ref{sec:esstangles}, we generalize Conjecture~\ref{conj:essconway}.
\begin{conjectureB}
L-space knots have no essential tangle decomposition.
\end{conjectureB}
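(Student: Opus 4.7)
The plan is to prove Conjecture~\ref{conj:nstringprime} by a case analysis on the number of strings $n$ in the putative essential tangle decomposition, combining the known geometric constraints on L-space knots (fiberedness and support of the tight contact structure on $S^3$) with the rigidity of $\hfk$. The case $n=1$ asserts that L-space knots are prime, which follows from the K\"unneth formula for $\hfk$ of a connected sum together with the rank-one-per-Alexander-grading structure of L-space knots. The case $n=2$ is exactly Conjecture~\ref{conj:essconway}, which the present paper corroborates via Theorem~\ref{thm:main} but does not settle in general. Thus the new content of Conjecture~\ref{conj:nstringprime} lies in $n\geq 3$, and any realistic strategy should simultaneously suggest an attack on $n=2$.

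The geometric prong is to exploit fiberedness and the Hopf-plumbed structure of the fiber. Let $K$ be a fibered L-space knot with fiber $F$ and monodromy $\phi$, and suppose $S$ is an essential $2n$-string tangle sphere for $K$. After isotopy, $F\cap S$ is a system of essential arcs in the $2n$-punctured sphere $S\setminus K$ whose endpoint pattern is permuted by $\phi$. Because $K$ supports the tight contact structure on $S^3$, Hedden's work forces $\phi$ to be a product of right-handed Dehn twists along curves along which $F$ is Hopf-plumbed from a disk, just as in the proof of Theorem~\ref{thm:fibered}. One would then try to show that a $\phi$-invariant essential arc system on such a plumbed fiber forces one of the two sides of $S$ to be a trivial tangle, i.e.\ parallel to the sphere boundary. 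This mirrors the case analysis of section~\ref{sec:monty}, with essential arc systems playing the role that a Conway cut plays there.

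The algebraic prong is to use tangle Floer invariants (Petkova-Vert\'esi, or the bordered knot Floer algebra of \os) to decompose $\hfk(K)$ as a pairing of tangle invariants across $S$. The L-space knot condition is an extreme rigidity on $\hfk$: equivalently, the Alexander polynomial has $\{0,\pm 1\}$ coefficients with alternating signs, and the nonzero Alexander gradings form a symmetric formal semigroup in $\Z_{\geq 0}$. The aim would be to show that when both tangle invariants are genuinely essential, their pairing introduces either higher-rank summands in some Alexander grading or forbidden gaps, contradicting this rigidity. The hard part, I expect, is translating ``essential tangle'' into a usable algebraic condition: there is no known structure theorem for tangle Floer invariants of essential tangles analogous to the Hanselman-Rasmussen-Rasmussen-Watson L-space gluing theorem for torus boundaries. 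Developing even a partial such theorem --- for instance one specific to tangles whose outer closure is an L-space knot --- is likely where the bulk of the work lies, and where the hardest case $n=2$ (Conjecture~\ref{conj:essconway}) itself stands or falls.
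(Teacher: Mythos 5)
The statement you were asked about is Conjecture~\ref{conj:nstringprime}; it is open, and the paper does not prove it. What the paper offers is evidence: Theorem~\ref{thm:main} (no new L-space knots appear among Montesinos knots, which for length $\geq 4$ do have essential Conway spheres), Krcatovich's theorem \cite{krcatovich} for the $1$--string case, Gordon--Reid \cite{gordonreid} showing tunnel number one L-space knots satisfy the conjecture, and the satellite reductions via \cite{hmo} and \cite{HLV}. Your submission is, by its own account, a research plan rather than a proof, so there is no argument here to certify; but since you sketch two prongs, let me point out where each one breaks as stated.

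First, the claim that the $n=1$ case ``follows from the K\"unneth formula for $\hfk$ of a connected sum together with the rank-one-per-Alexander-grading structure'' is too quick: the tensor decomposition of $\hfk(K_1\# K_2)$ does not immediately contradict rank one in each Alexander grading (for instance, in grading $g_1+g_2-1$ one gets $\hfk(K_1,g_1-1)\oplus \hfk(K_2,g_2-1)$, which can have rank one), and primeness of L-space knots is genuinely Krcatovich's theorem, proved with the reduced knot Floer complex --- which is exactly what the paper cites. Second, and more seriously, your geometric prong cannot succeed with only the inputs you name. Fiberedness plus support of the tight contact structure (equivalently, fibered strong quasipositivity), even with a fiber obtained by plumbing positive Hopf bands, does not obstruct essential tangle spheres: the knots of Theorem~\ref{thm:fibered} are Hopf-plumbed fibered Montesinos knots of arbitrary length $r$, and for $r\geq 4$ these have essential Conway spheres, as noted in section~\ref{sec:esstangles}. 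So no analysis of $\phi$-invariant arc systems on a positively plumbed fiber can, by itself, force one side of the sphere to be trivial; any viable argument must use the L-space condition beyond these two properties (e.g.\ the determinant--genus inequality and the Alexander polynomial constraints, which is precisely how the paper separates the $r\geq 4$ examples from actual L-space knots). Third, your algebraic prong defers the entire difficulty to a gluing/structure theorem for tangle Floer invariants of essential tangles that, as you acknowledge, does not currently exist; identifying that as ``where the bulk of the work lies'' is a fair assessment, but it means the proposal contains no step that advances the conjecture past the evidence already assembled in the paper.
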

We examine this conjecture in the contexts of satellite L-space knots and tunnel numbers of L-space knots and pose a few questions.

 %%%%%%%%%%%%%%%%%%%%%%%%%%%%%

\section{Preliminaries}
\subsection{Open books}

We refer the reader to the lecture notes \cite{etnyre-openbooks} of Etnyre for a useful survey on the basics of open books and contact structures.  Nevertheless, let us remind the reader of a few items.

An {\em open book} for an oriented $3$--manifold $Y$ is a link $L$ with a fibration of its complement  $\phi \colon Y-L \to S^1$ such that a fiber $F = \phi^{-1}(0)$ is a Seifert surface for the link, $\bdry F = L$.   Since the binding of an open book is an oriented fibered link, we may simply speak of the fibered link (up to orientation reversal of all the components) since a fiber and hence the fibration will be understood. Each open book for a $3$--manifold induces a contact structure on that manifold \cite{thurstonwinkelnkemper}; more precisely, it {\em supports} a unique contact structure \cite{giroux}. Contact structures on $3$--manifolds can either be {\em tight} or {\em overtwisted}.  On $S^3$ there is a unique tight contact structure.

The positive and negative Hopf bands, $H^+$ and $H^-$, are shown in Figure~\ref{fig:hopfbands} left and center.   
Let us say a Seifert surface {\em contains} a positive or negative Hopf band if one may be deplumbed from the surface.

Following \cite{yamamoto}, an essential simple closed curve in a Seifert surface is a {\em twisting loop} if it bounds a disk in the manifold with the same framing as the Seifert surface. For example, the connected sum of a positive and negative Hopf band contains a twisting loop, shown in Figure~\ref{fig:hopfbands} right.
\begin{figure}[h]
\centering
\includegraphics[height=1in]{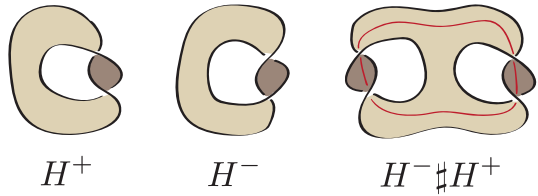}
\caption{The positive and negative Hopf bands, and their connected sum contains a twisting loop as shown.}
\label{fig:hopfbands}
\end{figure}

\begin{lemma}\label{lem:OTmarkers} 
%Let $(Y, F)$ denote an open book.
Let $F$ be a fiber of an open book.
\begin{itemize} 
	\item If $F$ contains a positive Hopf band, then its open book supports the same contact structure as the open book obtained by deplumbing that positive Hopf band, \cite{giroux}.
	\item If $F$ contains a negative Hopf band, then its open book supports an overtwisted contact structure, e.g. \cite{etnyre-openbooks}.
	\item If $F$ contains twisting loop, then its open book supports an overtwisted contact structure,  \cite[Theorem 1.1]{yamamoto}.  \qed
\end{itemize}
\end{lemma}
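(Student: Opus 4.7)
The plan is to verify each bullet by appealing to the cited references and explaining the geometric mechanism. The unifying framework is Giroux's correspondence between open books (up to positive stabilization) and contact structures (up to isotopy), together with the identification of plumbing a signed Hopf band with a signed stabilization of the abstract open book $(F,\phi)$.

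For the first item, I would observe that plumbing a positive Hopf band to $F$ along an arc $\alpha \subset F$ replaces $(F,\phi)$ by $(F \cup H^+, \phi \circ D_\gamma)$, where $\gamma$ is the core of $H^+$ and $D_\gamma$ denotes a positive Dehn twist. This is precisely a positive stabilization of the abstract open book, and by \cite{giroux} such stabilizations preserve the supported contact structure. Since deplumbing is the inverse operation, the first statement follows.

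For the second item, the same framework shows that plumbing a negative Hopf band corresponds to a negative stabilization: the monodromy becomes $\phi \circ D_\gamma^{-1}$ along the core $\gamma$ of $H^-$. An overtwisted disk then arises from an explicit Legendrian realization along the page of an unknot supported on $H^-$: the page framing disagrees with the Seifert framing in the direction that forces the Thurston--Bennequin invariant to exceed the Bennequin bound $tb \le -1$ for unknots in a tight structure. Hence the supported contact structure cannot be tight, which is the content quoted in \cite{etnyre-openbooks}.

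For the third item, I would simply invoke Yamamoto's Theorem 1.1 as cited: by definition a twisting loop $\gamma \subset F$ is an essential simple closed curve bounding a disk $D \subset Y$ whose framing from $D$ agrees with the page framing inherited from $F$. Yamamoto's argument upgrades this coincidence of framings into an overtwisted disk by Legendrian-realizing $\gamma$ on the page and observing that the resulting $tb(\gamma) = 0$ again violates Bennequin's inequality. The main obstacle common to all three items is bookkeeping: signs of Dehn twists, orientations of Hopf bands, and the compatibility between page framings and Seifert framings must be tracked consistently with the conventions in \cite{giroux, etnyre-openbooks, yamamoto}. Because each bullet is extracted from established literature, the verification reduces to this consistency check rather than to any new geometric construction.
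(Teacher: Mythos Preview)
Your proposal is correct and consistent with the paper's treatment: the paper offers no proof at all beyond the inline citations and the \qed, so you have simply unpacked what those references say. Your explanations of positive/negative stabilization and of Yamamoto's twisting-loop argument are accurate and add nothing the paper needs, since the lemma is stated purely as a compilation of cited results.
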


\subsection{A basic sequence of deplumbings}

\begin{figure}[h]
	\centering
                	\includegraphics[scale=.4]{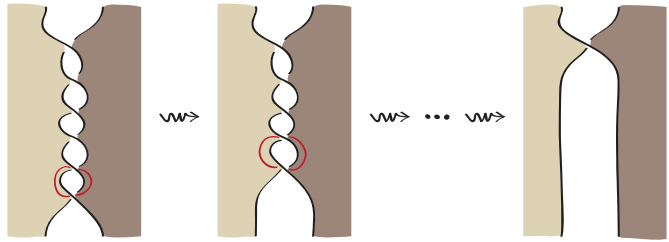}
	\caption{When a surface laterally encounters a twist region, there is a sequence of ``obvious deplumbings''.  Shown on the left is a vertical twist region of $6$ negative half-twists from which $5$ negative Hopf bands are successively deplumbed.}
    	\label{fig:Sketch0}
\end{figure}

\begin{lemma}\label{lem:obvioushopf}
Let $F$ be a Seifert surface for a link $L$ with a sequence of $n$ half-twists in $L$ as shown in the left side of Figure~\ref{fig:Sketch0} for $n=-6$.  If $n \geq 2$, then $F$ contains $n-1$ positive Hopf bands which may be successively deplumbed, leaving a single positive half twist. lf $n \leq -2$, then $F$ contains $|n|-1$ negative Hopf bands which may be successively deplumbed, leaving a single negative half twist.
\end{lemma}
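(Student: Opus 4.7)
The plan is a short induction on $|n|$ whose essential content is contained in Figure~\ref{fig:Sketch0}. Since mirroring $F$ in $S^3$ exchanges positive and negative Hopf bands and simultaneously reverses the sign of each half-twist, the case $n \leq -2$ will follow from the case $n \geq 2$ by symmetry, so I restrict to $n \geq 2$.

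The key geometric step is to exhibit a positive Hopf band plumbing at one end of the twist region. Between the two outermost consecutive half-twists there is a bigon disk $D$ embedded in $S^3 \setminus F$ whose boundary consists of two arcs of $L$ meeting at the two crossings bounding that pair of half-twists. Together with a regular neighborhood in $F$ of a suitable properly embedded arc $\alpha$ crossing the twist, the disk $D$ realizes a positive Hopf band $H^+$ as a plumbing summand of $F$. Deplumbing $H^+$ produces a surface $F'$ that agrees with $F$ outside the twist region and has $n-1$ positive half-twists in place of $n$. The sign is positive precisely because the local picture of $D$, $\alpha$, and the twist region matches the standard plumbing picture of $H^+$ rather than $H^-$.

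The base case $n=2$ consists of exactly this single deplumbing, which leaves the required single positive half-twist. For the inductive step with $n \geq 3$, apply the construction once and then invoke the inductive hypothesis on $F'$, yielding $n-2$ further successive deplumbings for a total of $n-1$ positive Hopf bands.

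The main obstacle is purely pictorial: one must check that the bigon $D$ is embedded and disjoint from the interior of $F$, and that the resulting summand is indeed a Hopf band of the asserted sign. Both are transparent from the local picture of a lateral twist region, so the writeup reduces to carefully identifying the plumbing data in Figure~\ref{fig:Sketch0} and confirming the sign by comparing local framings.
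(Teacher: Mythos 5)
Your proposal is correct and matches the paper's argument in essence: the paper likewise proves the lemma by an induction made evident from the local picture of the twist region (deplumbing one Hopf band per step until a single half-twist remains), handling the opposite sign by mirroring, with the only cosmetic difference being that the paper draws the negative case and mirrors for the positive one while you do the reverse. Your slightly more explicit identification of the plumbing data is a reasonable expansion of what the paper delegates entirely to Figure~\ref{fig:Sketch0}.
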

\begin{proof}
The case of $n=-6$ is shown in Figure~\ref{fig:Sketch0} and makes an inductive proof clear.  Mirror the figure for positive $n$.
\end{proof}

%%%%%%%%%%%%%%%%%%%%%%%%%%%%%%%%%%%%%

\subsection{Montesinos knots and links}\label{sec:monty}
As stated in the introduction, we follow the conventions of Hirasawa-Murasugi for our notation \cite{HM-monty}.
A Montesinos link $K$ is denoted   
\[ 
	K=M\left( \frac{\beta_1}{\alpha_1}, \frac{\beta_2}{\alpha_2,} \dots, \frac{\beta_r}{\alpha_r} , \,\vert\, e \right) 
\]
where $\alpha_i >1$, $|\beta_i| < \alpha_i$, and $\gcd(\alpha_i, \beta_i) =1$.  The number $r$ is the {\em length} of the Montesinos link.
As an illustration, Figure~\ref{fig:montyexample} shows the length $3$ Montesinos knot $M(\tfrac{3}{4}, -\tfrac{2}{5}, \tfrac{1}{3} \vert 3)$.
\begin{figure}[h]
	\centering
                	\includegraphics[width=.9\textwidth]{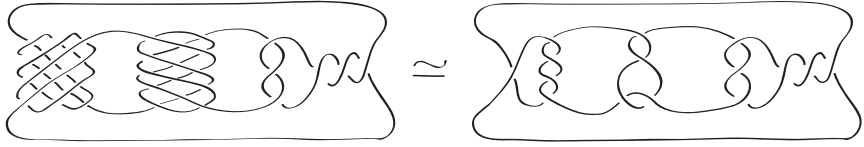}
	\caption{Two isotopic presentations of the Montesinos knot $M(\tfrac{3}{4}, -\tfrac{2}{5}, \tfrac{1}{3} \vert 3)$.}
    	\label{fig:montyexample}
\end{figure}

If the Montesinos link $K$ above is a {\em knot} then at most one of $\alpha_1, \dots, \alpha_r$ is even. (Note that this is not a sufficient condition for being a knot.)   By an isotopy of $K$, one may cyclically permute indices so that $\alpha_2, \dots, \alpha_r$ are all odd.  With this setup, Hirasawa-Murasugi then partition Montesinos knots into {\em odd types} and {\em even types} according to whether or not $\alpha_1$ is odd.  Hirasawa-Murasugi's Theorems~3.1 and 3.2 describe both the genera and fiberedness of Montesinos knots for odd types and even types, respectively \cite{HM-monty}.

{\bf Notation:}  In this article we will write, for example, O(II-3-ii) to refer to condition (II)(3)(ii) of Theorem~3.1 and E(III-i) to refer to condition (III)(i) of Theorem~3.2 of Hirasawa-Murasugi \cite{HM-monty}.

Hirasawa-Murasugi use special forms of continued fractions for the terms $\frac{\beta_i}{\alpha_i}$ in the notation of a Montesinos knot to describe minimal genus Seifert surfaces.  They define the continued fraction $S = [x_1, x_2, \dots, x_m]$ for a rational number $\frac{\beta}{\alpha}$ with $-\alpha<\beta<\alpha$  as the expression
\[ \frac{\beta}{\alpha} = \cfrac{1}{x_1 - \cfrac{1}{x_2-\cfrac{1}{\ddots - \cfrac{1}{x_m}}}}\]
where every coefficient $x_i$ is non-zero.   The continued fraction $S$ is said to be {\em even} if each coefficient $x_i$ is even.  The continued fraction $S$ is said to be {\em strict} if for each odd $j$ both (a) $x_j$ is even and (b) if $x_j = \pm2$ then $x_{j+1}$ has the opposite sign.

We will also need the determinant of a Montesinos knot.  This can be calculated as the order of the first homology of the double branched cover of the knot.   Using \cite{saveliev} as a reference and making adjustments for differences in notation, one obtains the following.
\begin{lemma}\label{lem:determinant}
The determinant of the Montesinos knot
$K=M\left( \frac{\beta_1}{\alpha_1}, \frac{\beta_2}{\alpha_2,} \dots, \frac{\beta_r}{\alpha_r} \,\vert\, e \right)$
is 
\[ \det(K) = \left| \prod_{i=1}^r \alpha_i \left( e + \sum_{i=1}^r \frac{\beta_i}{\alpha_i} \right ) \right|.\]
\end{lemma}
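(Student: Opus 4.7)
The plan is to identify $\det(K)$ with $|H_1(\Sigma_2(K);\Z)|$, where $\Sigma_2(K)$ is the double branched cover of $S^3$ over $K$, exhibit $\Sigma_2(K)$ as the Seifert fibered space over $S^2$ whose Seifert invariants are read directly off the Montesinos parameters, and then compute $|H_1|$ from the standard presentation matrix.

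First I would invoke the classical identification $\det(K) = |H_1(\Sigma_2(K); \Z)|$. Following \cite{saveliev}, for a Montesinos knot $K = M(\beta_1/\alpha_1, \dots, \beta_r/\alpha_r \mid e)$ the double branched cover $\Sigma_2(K)$ is the Seifert fibered space over $S^2$ with (unnormalized) Seifert invariants $(e; (\alpha_1,\beta_1), \dots, (\alpha_r,\beta_r))$: geometrically, each rational tangle $\beta_i/\alpha_i$ lifts to a solid torus glued in with slope $\beta_i/\alpha_i$, and the integer $e$ records the overall framing twist.

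Next, I would write down the standard abelian presentation for $H_1(\Sigma_2(K);\Z)$. With generators the meridians $x_1,\dots,x_r$ of the exceptional fibers together with a regular fiber class $h$, the Seifert relations $\alpha_i x_i + \beta_i h = 0$ and $x_1+\cdots+x_r = e\,h$ yield the $(r+1)\times(r+1)$ presentation matrix
\[
A = \begin{pmatrix}
\alpha_1 & 0 & \cdots & 0 & \beta_1 \\
0 & \alpha_2 & \cdots & 0 & \beta_2 \\
\vdots & \vdots & \ddots & \vdots & \vdots \\
0 & 0 & \cdots & \alpha_r & \beta_r \\
1 & 1 & \cdots & 1 & -e
\end{pmatrix},
\]
so that $|H_1(\Sigma_2(K);\Z)| = |\det A|$. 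A Schur complement (or cofactor) computation, using invertibility of the upper-left diagonal block, gives
\[
\det A = \Bigl(\prod_{i=1}^r \alpha_i\Bigr) \Bigl(-e - \sum_{i=1}^r \tfrac{\beta_i}{\alpha_i}\Bigr),
\]
and taking absolute values yields the claimed formula.

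The main obstacle---and precisely the reason the statement in the paper notes ``making adjustments for differences in notation''---is reconciling Saveliev's sign conventions for $e$ and the $\beta_i$ with those of Hirasawa-Murasugi. Because the final formula is wrapped in an absolute value, any uniform sign flip of $(e,\beta_1,\dots,\beta_r)$ is harmless, so the only delicate step is verifying that the dictionary between the two conventions acts on all parameters with the same sign; the actual determinant computation above is otherwise routine.
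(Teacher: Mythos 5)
Your proposal follows essentially the same route as the paper, which states only that the determinant is the order of $H_1$ of the double branched cover and cites \cite{saveliev} with ``adjustments for differences in notation''; you simply make explicit the Seifert presentation matrix and the Schur-complement computation giving $\bigl|\prod \alpha_i\bigl(e+\sum \beta_i/\alpha_i\bigr)\bigr|$, which is correct. The one caveat you flag---that the dictionary between the Hirasawa--Murasugi and Saveliev conventions must act on $e$ and the $\beta_i$ with a consistent sign so the relative sign inside the absolute value is right---is exactly the ``adjustment'' the paper leaves implicit, so there is no substantive difference.
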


\section{Proofs of Theorems \ref{thm:main} and \ref{thm:fibered}}\label{sec:mainproofs}

\subsection{Strategy of proof of Theorem~\ref{thm:main}}\label{sec:strategy}
Recall the fundamental properties of L-space knots noted in the introduction:
\begin{itemize}
\item The non-zero coefficients of the Alexander polynomial of an L-space knot are $\pm1$ and alternate in sign \cite{OS-lens}.
\item An L-space knot $K$ of genus $g(K)$ satisfies the {\em determinant-genus inequality}: $\det(K) \leq 2g(k)+1$,  \cite[Lemma~5]{lidmanmoore}.
\item An L-space knot is fibered \cite{ni}.  
\item An  L-space knot supports the tight contact structure; that is, it is a fibered strongly quasipositive knot  \cite[Corollary~1.4 with Proposition~2.1]{hedden}.
\end{itemize}
These properties suggest a general strategy for identifying L-space knots among some collection of knots. 
 Briefly, the strategy is: (1) select the knots in the collection which are fibered and (2) support the tight contact structure, (3) cull the knots which do not satisfy the determinant-genus inequality, and (4) discard those whose Alexander polynomials do not have the correct form.  Ideally, at this stage the remaining knots may be recognized as L-space knots; but if not, (5) compute the knot Floer homology of the knots or the Heegaard Floer homology of large surgeries on the knots.

Following this strategy for the proof of Theorem~\ref{thm:main}, we (1) appeal to Hirasawa-Murasugi's classification of fibered Montesinos knots \cite{HM-monty} and then (2) cull those that either admit a Stallings twist or may have a negative Hopf band deplumbed (since by Lemma~\ref{lem:OTmarkers} these indicate that a fibered knot supports an overtwisted contact structure). At this point we certify that the remaining knots indeed do support the tight contact structure by showing they can all be obtained by successive plumbings of positive Hopf bands.   This completes stage (2) of the strategy producing two manageable families of Montesinos knots. (Truth be told, one of these two families are identifiable as pretzel knots, so we simply invoke the results of \cite{lidmanmoore}.  Nevertheless, one could continue with the strategy instead.)  Stage (3) then follows in a more-or-less straightforward calculation from formulae for the determinant of Montesinos knots which significantly reduces the set of Montesinos knots to be considered. For stage (4), computations of the Alexander polynomials are then sufficient to rule out all the knots that are not already known to be L-space knots.  Fortunately, stage (5) is unnecessary.   

Observe that stage (2) of the strategy produces the collection of fibered Montesinos knots that support the tight contact structure, Theorem~\ref{thm:fibered}. As a consequence of \cite[Corollary~1.4 with Proposition~2.1]{hedden}, this collection also describes the set of fibered Montesinos knots which are strongly quasipositive.

\begin{question}
Which non-fibered Montesinos knots are strongly quasipositive?
\end{question}

\subsection{Two-bridge knots and links}
Montesinos links of length $r=1$ or $2$ are two-bridge links.

\begin{lemma}\label{lem:twobridge}
The fibered two-bridge links that support the tight contact structure are the torus links $T(2,N)$ for integers $N\geq1$.

The L-space two-bridge knots are the torus knots $T(2,2n+1)$ for integers $n\geq0$.
\end{lemma}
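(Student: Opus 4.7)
The plan is to leverage the fact that two-bridge links are precisely the Montesinos links of length $r\le 2$ and that their fibered representatives come with explicit Hopf plumbing descriptions. First I would recall (or extract from the $r=1,2$ cases of Hirasawa--Murasugi's Theorems~3.1 and 3.2) that a two-bridge link $K(\beta/\alpha)$ is fibered if and only if $\beta/\alpha$ admits an even continued fraction expansion $[2\epsilon_1, 2\epsilon_2, \ldots, 2\epsilon_m]$ with each $\epsilon_i \in \{\pm 1\}$, and that in this case the minimal-genus Seifert surface $F$ is a linear plumbing of Hopf bands $H^{\epsilon_1}, H^{\epsilon_2}, \ldots, H^{\epsilon_m}$ (the ``twisted band'' surface from Seifert's algorithm, which is a fiber precisely when every $|a_i|=1$).

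Next I would split into cases by the signs $\epsilon_i$. If all $\epsilon_i = +1$, then $F$ is a sequence of $m$ plumbed positive Hopf bands whose binding is the torus link $T(2,m+1)$, and its open book supports the tight contact structure on $S^3$ by iterated application of Lemma~\ref{lem:OTmarkers} (deplumbing positive Hopf bands one at a time reduces to the disk, which supports the tight structure). Mirroring gives all $\epsilon_i=-1$, which yields the mirror torus link and an overtwisted structure. If instead the signs are mixed, then $F$ contains a negative Hopf band at an end after successively deplumbing positive ones from that end of the linear plumbing; alternatively, any adjacent pair $H^+,H^-$ in the plumbing sequence produces a twisting loop as in Figure~\ref{fig:hopfbands}. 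In either case Lemma~\ref{lem:OTmarkers} forces the contact structure to be overtwisted. This completes the classification: the fibered two-bridge links supporting the tight contact structure on $S^3$ are exactly $T(2,N)$ for $N\geq 1$.

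For the L-space statement, any L-space knot is fibered and supports the tight contact structure by the properties recalled in section~\ref{sec:strategy}, so any L-space two-bridge knot must appear on the list $T(2,N)$. The condition of being a knot (rather than a two-component link) forces $N$ to be odd, so $N=2n+1$ with $n\geq 0$. Conversely, these positive torus knots are classically L-space knots via Moser's lens space surgeries. Combined, this yields the second assertion.

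The main obstacle is step one: pinning down the dictionary between the even continued fraction of a fibered two-bridge link and an explicit linear Hopf plumbing with the correct signs. Once this dictionary (and its sign conventions) is in hand, the remainder of the argument is an essentially combinatorial case split driven by Lemma~\ref{lem:OTmarkers}.
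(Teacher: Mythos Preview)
Your proposal is correct and matches the paper's own proof almost exactly: the paper also invokes the well-known description of fibered two-bridge links as linear plumbings of positive and negative Hopf bands (continued fraction coefficients all $\pm2$), applies Lemma~\ref{lem:OTmarkers} to conclude that only the all-positive plumbings support the tight structure, identifies these with $T(2,N)$, and then notes that the odd $N$ give knots with lens space surgeries. Your write-up is simply a more detailed rendition of the same sketch.
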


\begin{proof}
We offer a quick sketch.
Fibered two-bridge knots and links are well-known to be obtained as a linear chain of plumbings of positive and negative Hopf bands (e.g.\ \cite{GKpA}); they have corresponding continued fraction expansions where each coefficient is $\pm2$.  Among these, only those built from positive Hopf bands support the tight contact structure on $S^3$ (Lemma~\ref{lem:OTmarkers}), and these happen to be the torus links $T(2,N)$ for integers $N\geq 1$.  When $N=2n+1$ is odd, these torus links are knots and $(2N-1)$--surgery is a lens space.  Thus the fibered two-bridge knots that support the tight contact structure are the L-space two-bridge knots.
\end{proof}

Alternatively, since two-bridge knots are alternating \cite{goodrick}, Theorem 1.5 of \cite{OS-lens} implies that the only two-bridge knots admitting L-space surgeries are those which are isotopic to a torus knot $T(2, 2n + 1)$ for some integer $n$.

We will henceforth assume that $r\geq3$.

%%%%%%%%%%%%%%%%%%%%%%%%%%%%%%%%%%%
%%%%%%%%%%%%%%%%%%%  ODD  %%%%%%%%%%%%
%%%%%%%%%%%%%%%%%%%%%%%%%%%%%%%%%%%

\subsection{Odd fibered Montesinos knots}

\begin{proposition}
\label{prop:correctform}
Let $K$ be an odd fibered Montesinos knot supporting the tight contact structure. Then for some set of positive integers $d_1, \dots, d_r$ such that $d_1 + \dots + d_r$ is even,
\[
	K = M( \tfrac{-d_1}{2d_1+1},\dots, \tfrac{-d_r}{2d_r+1} \vert1 ).
\]
Moreover, the fiber of $K$, shown on the left side of Figure~\ref{fig:posfiberedmonty}, is a positive Hopf plumbing.
\end{proposition}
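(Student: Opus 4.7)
The plan is to march through Hirasawa-Murasugi's Theorem~3.1, which enumerates the fibered Montesinos knots of odd type together with explicit minimal-genus fiber surfaces built from strict continued fraction expansions of the tangle slopes $\beta_i/\alpha_i$ and the integer $e$. For each subcase in their classification (O(I), O(II-1), O(II-2), O(II-3-i,ii), and O(III)) I will locate one of the two local features from Lemma~\ref{lem:OTmarkers}, namely a deplumbable negative Hopf band or a twisting loop, whose presence forces the supported contact structure to be overtwisted. The surviving family must then agree with the one in the statement.

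The uniform tool in the first step is Lemma~\ref{lem:obvioushopf}: each lateral twist region of $n$ crossings in the fiber, corresponding to a continued fraction coefficient $x_j$, deplumbs $|n|-1$ Hopf bands whose sign matches that of $n$. Thus any coefficient strictly below $-2$ exposes a negative Hopf band, and any two adjacent coefficients of opposite sign on the same bridge yield a connected sum of a positive and negative Hopf band, producing the twisting loop pictured in Figure~\ref{fig:hopfbands}. This rules out every Hirasawa-Murasugi subcase except those in which the tangle data is rigidly constrained to the form $\beta_i/\alpha_i = -d_i/(2d_i+1)$ and the framing takes the unique compatible value $e=1$; the central twist region is checked separately by the same sign analysis.

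After the case elimination, two short tasks remain. First, I verify that the surviving fiber is built by successive positive Hopf plumbings from a disk; this follows by iteratively applying the positive version of Lemma~\ref{lem:obvioushopf} to the surface drawn on the left of Figure~\ref{fig:posfiberedmonty}, stripping off one positive Hopf band at a time until only a disk remains, and thereby also re-certifying via Lemma~\ref{lem:OTmarkers} that the contact structure really is tight. Second, I extract the parity constraint $d_1+\cdots+d_r \equiv 0 \pmod 2$ from Lemma~\ref{lem:determinant}: substituting $\beta_i = -d_i$, $\alpha_i = 2d_i+1$, $e=1$ into the determinant formula yields a quantity whose being a nonzero odd integer (as it must be for a knot) is equivalent to the asserted parity of $d_1 + \cdots + d_r$.

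The principal obstacle is the sheer amount of case analysis in the first step: Theorem~3.1 of Hirasawa-Murasugi branches into many subcases, each with its own normalization of strict continued fractions, and for each one I must point out precisely where the negative Hopf band or twisting loop lives in the canonical fiber. Once that catalogue is compiled, the rest of the argument is essentially the combinatorial translation between continued fraction data and Hopf band signs, and the verification that the unique surviving family is exactly the one stated.
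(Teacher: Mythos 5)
Your overall strategy is the same as the paper's (run through Hirasawa--Murasugi's Theorem~3.1, kill subcases by exhibiting negative Hopf bands or twisting loops via Lemma~\ref{lem:OTmarkers}, then certify the survivors by positive Hopf plumbing), and your determinant-parity argument for why $d_1+\cdots+d_r$ must be even is a valid and arguably cleaner alternative to the paper's route (which performs crossing changes to reduce to the torus link $T(d_1+\cdots+d_r-1,2)$ and reads off the component count). However, the heart of the proof --- deciding exactly which strict continued fractions survive --- is carried in your write-up by two uniform heuristics, and both are false.

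First, you claim that every continued fraction coefficient $x_j$ corresponds to a lateral twist region to which Lemma~\ref{lem:obvioushopf} applies, so that any coefficient below $-2$ exposes a negative Hopf band. In the odd-type fiber (Figure~10 of \cite{HM-monty}) only the $b_j^{(i)}$ coefficients and $e$ sit in twist regions of that form; the $2a_j^{(i)}$ coefficients are realized as twisted bands, where Lemma~\ref{lem:obvioushopf} does not apply. Indeed the paper's analysis shows that expansions containing $-4$ coefficients, namely $S_i=[(-4,-1)^{[n]},-2,d_i]$, carry \emph{no} negative Hopf band and are isotopic (via $[\dots,-4,-1]=[\dots,-2,+1]$ and Figure~\ref{fig:212b_to_b1}) to the surviving tight family; your rule would wrongly declare them overtwisted. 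Second, you claim that adjacent coefficients of opposite sign produce a twisting loop; but the surviving family itself has $S_i=[-2,d_i]$ with $d_i>0$, adjacent coefficients of opposite sign, and supports the tight structure. (The paper uses twisting loops only in the even-type, $e=0$ case.) What is actually needed in place of these heuristics is the local analysis of Claims~\ref{claim:aij1} and \ref{claim:aij2}: the negative Hopf bands arise from specific configurations of a positive $a_j^{(i)}$ together with its neighboring $b$-coefficients or with $e$ (Figures~\ref{fig:baaB} and \ref{fig:BAAAAB}), sometimes only after an isotopy moving an $e$ half-twist into position, and ruling out $e<0$ itself depends on these claims via the requirement that $a_1^{(i)}$ have sign opposite $e$. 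Finally, even after the elimination your proposal stops at ``the surviving family must agree with the one in the statement''; you still need the continued-fraction identities and the isotopy of Figure~\ref{fig:212b_to_b1} to collapse the surviving strict expansions to the normal form $[-2,d_i]=\tfrac{-d_i}{2d_i+1}$, and the final deplumbing of $M(-\tfrac13,\dots,-\tfrac13\,\vert\,1)$ down to a single positive Hopf band is a separate observation, not an instance of Lemma~\ref{lem:obvioushopf}.
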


\begin{proof}
Let $K$ be a fibered Montesinos knot of odd type.
By Theorem 3.1 of Hirasawa-Murasugi \cite{HM-monty}, we may assume that $e\neq 0$ and each $\beta_i/\alpha_i$ has a strict continued fraction expansion
\[
	S_i = [ 2a_1^{(i)}, b_1^{(i)}, \dots, 2a_{q_i}^{(i)}, b_{q_i}^{(i)}]
\]
where $|a_j^{(i)}| = 1$ or $2$ for all $i,j$ and $a_1^{(i)}$ has sign opposite $e$.
Note that strictness of $S_i$ implies that if $|a_j^{(i)}|=1$, then $a_j^{(i)}$ and $\bji$ differ in sign. The fiber $F$ of $K$ appears as in Figure~10 of \cite{HM-monty}. 

(For notational purposes, one may care to define $b_0^{(i)}$ as $-e$.  Indeed, for each $i$, there is an isotopy of $F$ so that the $e$ twist region plays the role of a $b_0^{(i)}$ twist region.)

In accordance with Lemma~\ref{lem:OTmarkers}, we assume that $F$ has no negative Hopf bands. 
Immediately Lemma~\ref{lem:obvioushopf} gives 
\begin{itemize}
\item[($\ast$)] either $e = +1$ or $e<0$ and either $\bji=-1$ or $\bji >0$.
\end{itemize}

\begin{claim}\label{claim:aij1}
For each $i$, $|\aji|=1$ implies $\aji=-1$, $\bji>0$, and either $b_{j-1}^{(i)} = -1$ or $e=+1$ if $j-1=0$.
\end{claim}

\begin{proof}
First assume $\aji =+1$.  Then $\bji<0$ by the strictness of $S_i$.
If $j=1$, then O(II-2-i) implies $e<0$.  An isotopy of the fiber moves one of the $e$ half-twists into the position shown in Figure~\ref{fig:baaB} where a negative Hopf band is found. 
If $j>1$, then O(II-3-i) implies $b_{j-1}^{(i)}>0$.  Thus near the $\aji$ twists, the surface locally appears as in Figure~\ref{fig:baaB} where a negative Hopf band is evident.
Thus $\aji=-1$.  The strictness of $S_i$ then implies $\bji>0$.  Condition O(II-3-i) with ($\ast$) then implies $b_{j-1}^{(i)} = -1$.
\end{proof}

\begin{figure}
\centering
\includegraphics[scale=.4]{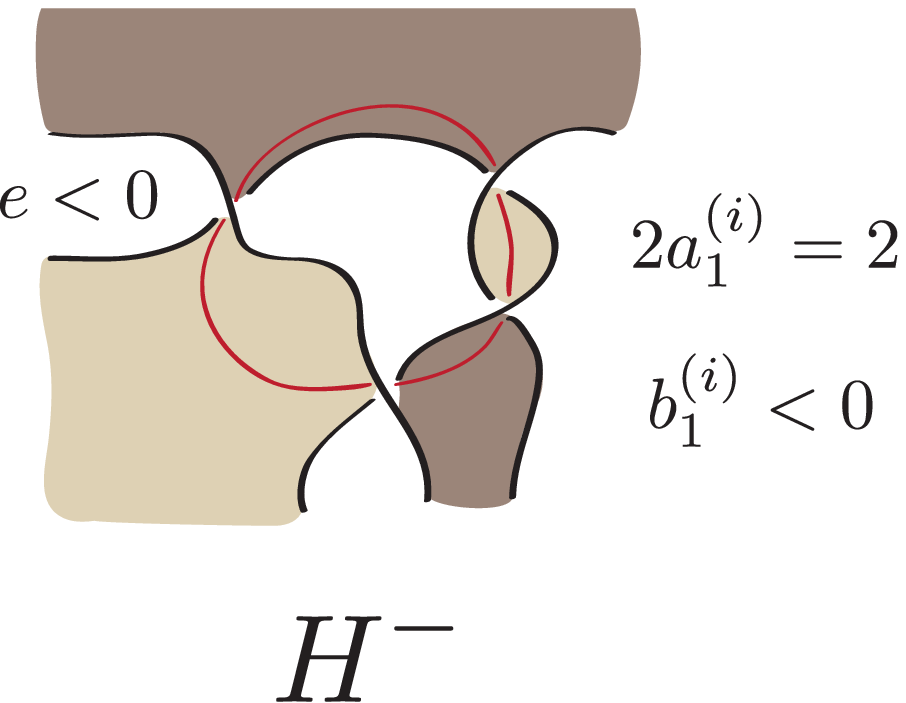} \quad \includegraphics[scale=.4]{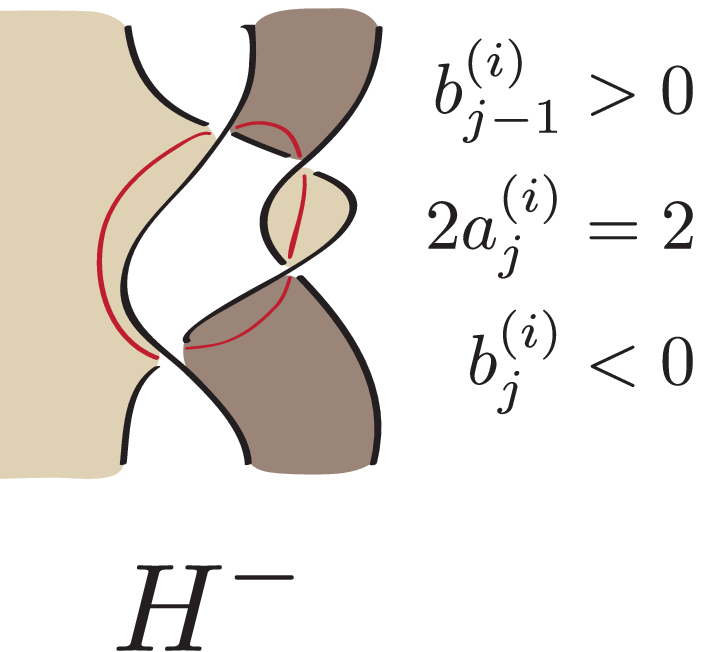}
\caption{When $a_j^{(i)}=1$ a negative Hopf band can be found. Left: $j=1$ after an isotopy of the surface.  Right: $j>1$.} 
\label{fig:baaB}
\end{figure}

\begin{figure}
\centering
\includegraphics[scale=.4]{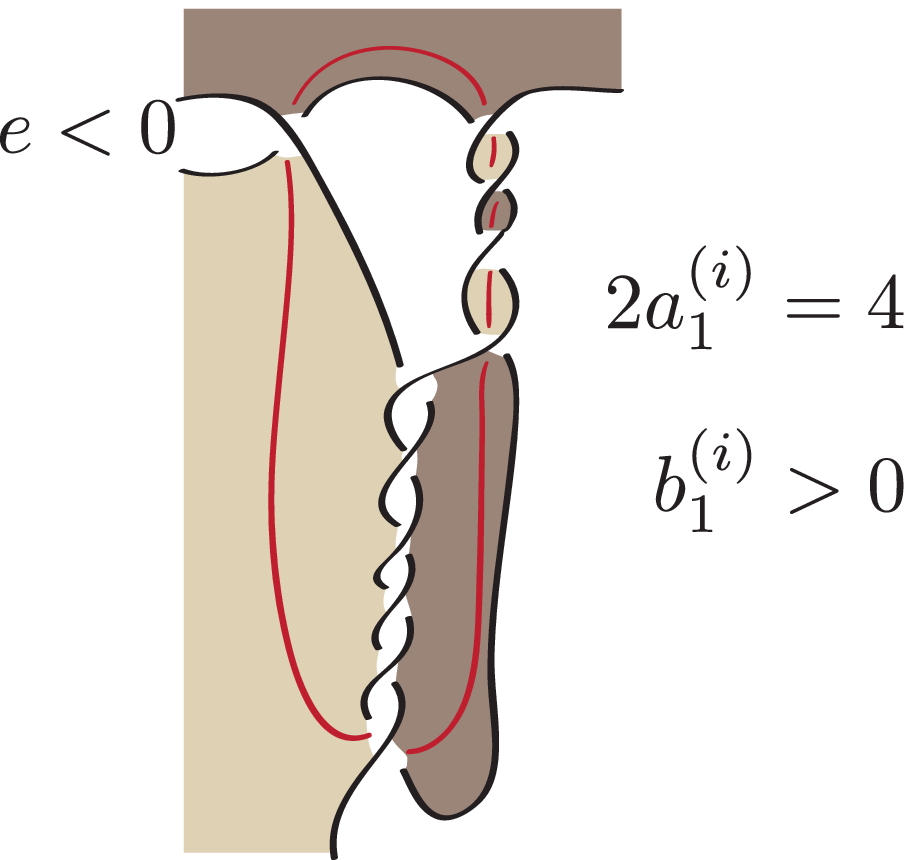} \quad\quad\quad \includegraphics[scale=.4]{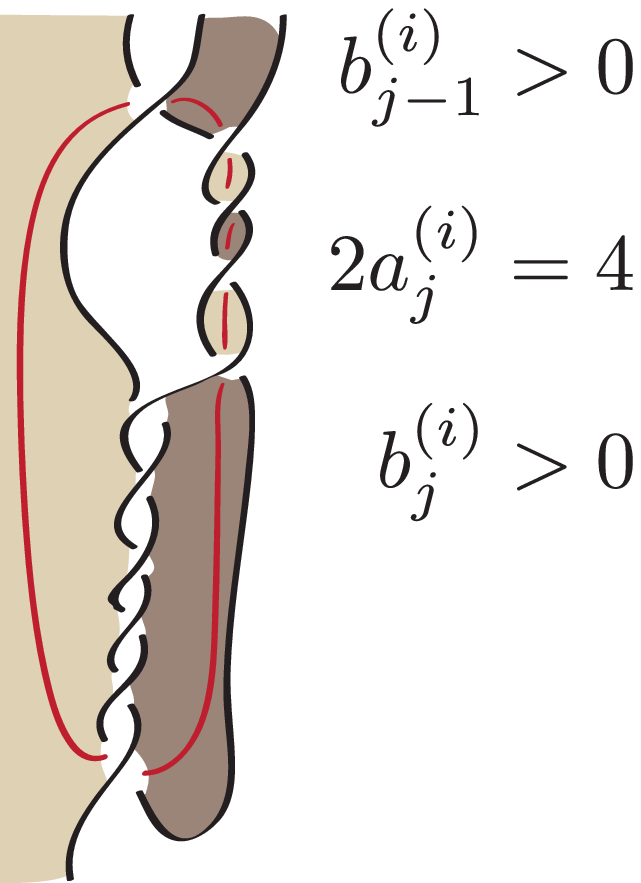}
\caption{When $a_j^{(i)}=2$ and $j$ is the final index, a negative Hopf band can be found. Left: $j=1$ after an isotopy of the surface.  Right: $j>1$.} 
\label{fig:BAAAAB}
\end{figure}

\begin{claim}\label{claim:aij2}
For each $i$, $|\aji|=2$ implies $\aji=-2$, $\bji=-1$, either $b_{j-1}^{(i)} = -1$ or $e=+1$ if $j-1=0$.
\end{claim}
\begin{proof}
First observe that if $\aji=-2$, the remaining conditions follow from O(II-3-ii), O(II-2-ii), and ($\ast$).

Let $j$ be the last index for which $\aji = +2$.  
If $j>1$, then O(II-3-ii) implies $\bji>0$ and $b_{j-1}^(i) >0$. If $j=1$, then O(II-2-ii) implies $\bji>0$ and $e<0$.   In either case, Claim~\ref{claim:aij1} and the first sentence of this proof then imply that $j$ must be the final index of $S_i$.
When $j=1$,   an isotopy of the fiber moves one of the $e$ half-twists into the position shown in Figure~\ref{fig:BAAAAB} where a negative Hopf band is found.
When $j>1$,  the surface locally appears near the $\aji$ twists as in Figure~\ref{fig:BAAAAB} where a negative Hopf band is evident.
\end{proof}

Let $(-4,-1)^{[n]}$ denote the sequence $-4,-1,-4,-1,\dots, -4,-1$ of length $2n$.
Together Claims~\ref{claim:aij1} and \ref{claim:aij2} then imply that $e=+1$ and for each strict continued fraction $S_i$, either $S_i=[(-4,-1)^{[n]} ]$ for $n\geq1$ or $S_i=[(-4,-1)^{[n]},-2,d_i]$ for $n\geq 0$ and $d_i>0$. Observing the equivalence $[\dots,-4,-1] = [\dots,-2,+1]$, we may assume the latter of these holds for $S_i$.  Figure~\ref{fig:212b_to_b1} then shows how to transform $S_i$ into the strict continued fraction $[-2,d_i]$ for some integer $d_i>0$.  Note that $[-2,d_i] = \frac{-d_i}{2d_i+1}$.  Thus any odd type fibered Montesinos knot without negative Hopf bands may be expressed as
\[K = M( \tfrac{-d_1}{2d_1+1},\dots, \tfrac{-d_r}{2d_r+1} \vert1 )\]
for some set of positive integers $d_1, \dots, d_r$.  

Figure~\ref{fig:posfiberedmonty} (left) illustrates Montesinos links of the form $M( \tfrac{-d_1}{2d_1+1},\dots, \tfrac{-d_r}{2d_r+1} \vert1 )$.  Performing a particular crossing change in each factor transforms this link to $M(-d_1, \dots, -d_r \vert 1)$ which may be recognized as the torus link $T(d_1+ \dots +d_r -1, 2)$.  Thus this link, and hence $K$ above, is a knot precisely when $d_1 + \dots + d_r$ is even.

To show that such a fibered knot actually supports the tight contact structure, we show that positive Hopf bands can be successively deplumbed from a fiber until a single positive Hopf band is obtained.  Indeed, beginning from Figure~\ref{fig:posfiberedmonty} (left), apply Lemma~\ref{lem:obvioushopf} to obtain a sequence of deplumbings that transform each $d_i$ down to $1$.  This results in the odd type fibered Montesinos link $M(-\tfrac{1}{3}, -\tfrac{1}{3}, \dots, -\tfrac{1}{3} \vert 1)$ of length $r$.  As demonstrated in Figure~\ref{fig:oddpretzel}, each $-\tfrac{1}{3}$ signifies a positive Hopf band that may be deplumbed, deleting that term from the notation.  After $r-1$ steps, we are left with the link $M(-\tfrac{1}{3} \vert 1)$ which is itself a positive Hopf band. Reversing this process exhibits the fiber of our original knot as a successive Hopf plumbing.
\end{proof}

\begin{figure}[h]
	\centering
                	\includegraphics[scale=.3]{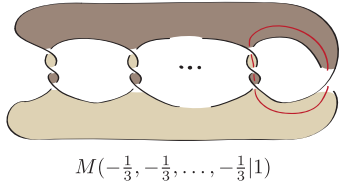}
	\caption{$M(-\tfrac{1}{3}, -\tfrac{1}{3}, \dots, -\tfrac{1}{3} \vert 1)$ has a positive Hopf band that may be deplumbed.}
    	\label{fig:oddpretzel}
\end{figure}

\begin{figure}[h]
	\centering
                	\includegraphics[scale=.3]{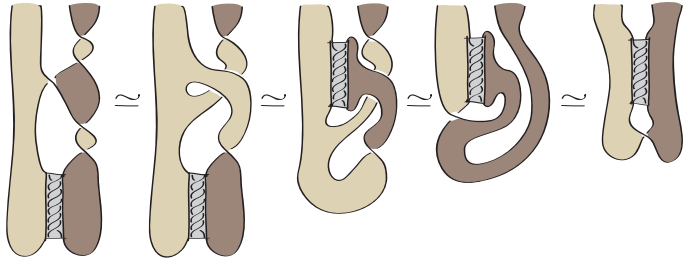}
	\caption{An isotopy illustrating the equivalence $[\dots, a-2,-1,-2,d] = [\dots, a, d+1]$}
    	\label{fig:212b_to_b1}
\end{figure} 

\begin{lemma}
\label{lem:detodd}
Let $K$ be an odd fibered Montesinos knot supporting the tight contact structure. Then $\det(K) > 2g(K) + 1$ unless
 $K=  M(\tfrac{1}{3}, \tfrac{1}{3}, \tfrac{2}{5} \vert1)$.
 \end{lemma}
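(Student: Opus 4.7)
By Proposition~\ref{prop:correctform}, $K = M(\tfrac{-d_1}{2d_1+1}, \dots, \tfrac{-d_r}{2d_r+1} \vert 1)$ for positive integers $d_i$ with $\sum_i d_i$ even, and we have restricted to $r \geq 3$. The plan is to extract closed-form expressions for both $\det(K)$ and $2g(K)+1$ directly from this presentation and then compare them. Setting $\alpha_i = 2d_i+1$ and $P = \prod_i \alpha_i$, Lemma~\ref{lem:determinant} combined with the substitution $2d_i = \alpha_i - 1$ simplifies algebraically to
\[
\det(K) \;=\; \tfrac{P}{2}\bigl[(r-2) - \textstyle\sum_i 1/\alpha_i\bigr],
\]
which is strictly positive for $r \geq 3$ under our parity assumption (the only way the bracket vanishes is $r=3$ with all $\alpha_i=3$, forcing $\sum d_i = 3$ odd). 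For the genus, Proposition~\ref{prop:correctform} already exhibits the fiber as a successive plumbing of $\sum_i d_i$ positive Hopf bands onto a disk, so $\chi(F) = 1 - \sum_i d_i$ and hence $2g(K) + 1 = \sum_i d_i + 1$.

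The comparison then splits by length. For $r = 3$, direct expansion gives $\det(K) = 4 d_1 d_2 d_3 - d_1 - d_2 - d_3 - 1$, and the desired inequality $\det(K) > 2g(K)+1$ reduces to $2 d_1 d_2 d_3 > d_1 + d_2 + d_3 + 1$. Assuming without loss of generality that $d_1 \leq d_2 \leq d_3$, the only delicate subcase is $d_1 = d_2 = 1$: here the inequality becomes $d_3 > 3$, and the parity condition forces $d_3$ to be even, so the unique failure is $d_3 = 2$, producing the stated exceptional knot. All remaining length-$3$ subcases follow from trivial estimates (e.g., if $d_2 \geq 2$, then $2 d_1 d_2 d_3 \geq 4 d_3 \geq 2(d_1 + d_2 + d_3) + \text{slack}$). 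For $r \geq 4$, the uniform bounds $\sum_i 1/\alpha_i \leq r/3$ and $\alpha_i \leq P/3^{r-1}$ give $\det(K) \geq P(r-3)/3$ while $\sum_i d_i + 1 \leq rP/(2 \cdot 3^{r-1}) + 1$, so the inequality reduces to $2(r-3) \cdot 3^{r-2} > r$, which is trivially true for $r \geq 4$ (and leaves enormous margin as $r$ grows).

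The main obstacle is the finicky case analysis at $r = 3$: the exception $(d_1, d_2, d_3) = (1,1,2)$ survives precisely because the inequality $2 d_1 d_2 d_3 > d_1 + d_2 + d_3 + 1$ fails by exactly one there, so the parity constraint and the ordering $d_1 \leq d_2 \leq d_3$ must be carefully tracked together to confirm that no other small configuration slips through. Everything else in the argument is either formal manipulation of the determinant formula or a loose estimate.
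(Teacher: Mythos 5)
Your proposal is correct and follows essentially the same route as the paper: extract $2g(K)+1=\sum d_i+1$ and $\det(K)$ from Proposition~\ref{prop:correctform} and Lemma~\ref{lem:determinant}, dispose of $r\geq 4$ by crude bounds, and isolate the exception by a finite check at $r=3$. Your $r=3$ reduction to the exact identity $\det(K)=4d_1d_2d_3-d_1-d_2-d_3-1$ and the clean inequality $2d_1d_2d_3>d_1+d_2+d_3+1$ is in fact a tidier packaging than the paper's bound $\bigl|1-\sum d_i/(2d_i+1)\bigr|\geq 1/15$ followed by checking four triples. One small slip: in your parenthetical for $d_2\geq 2$, the chain $4d_3\geq 2(d_1+d_2+d_3)$ is false (e.g.\ $d_1=d_2=d_3=2$); what you actually need is $4d_3>d_1+d_2+d_3+1$, which does hold since $3d_3\geq d_1+d_2+d_3\geq d_1+d_2+2$, so the conclusion is unaffected.
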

\begin{proof}
By Theorem 3.1 \cite{HM-monty}, an odd fibered Montesinos knot $K$ has genus
\[
	g(K) = \frac{1}{2} \left( \sum_{i=1}^r b^{(i)} + |e| -1 \right)
\]
where 
\[
	b^{(i)} = \sum_{j=1}^{q_i} b_j^{(i)}.
\]
By Proposition \ref{prop:correctform}, the parameters of any such $K$ supporting the tight contact structure are $r\geq3$, $e=+1$ and
$\frac{\beta_i}{\alpha_i} = \frac{-d_i}{2d_i+1} = [-2,d_i]$ for some integers $d_i>0$, for all $i=1, \dots, r$ (i.e. $q_i = 1$ for all $i$). Thus,
\[
	2g(K)+1 =\sum_{i=1}^r d_i + 1.
\]
Using Lemma~\ref{lem:determinant}, we have
\[\det(K) =  2^r\prod_{i=1}^r (d_i+\frac{1}{2}) \left|  1 - \sum_{i=1}^r \frac{d_i}{2d_i+1} \right|.\]

Since $d_i>0$ for all $i$, then
\[ 
\tag{$\dagger$}	\frac{r}{2} > \sum_{i=1}^r \frac{d_i}{2d_i+1} \geq \frac{r}{3}.
\]
When $r>3$, $\frac{r}{3}>1$ and applications of ($\dagger$) then gives
\[ 2^r \left|  1 - \sum_{i=1}^r \frac{d_i}{2d_i+1} \right| = 2^r \left(  \sum_{i=1}^r \frac{d_i}{2d_i+1} -1 \right)\geq 2^r \left( \frac{r}{3} - 1 \right) > 1.\]
Hence 
\[
\det(K) > \prod_{i=1}^r (d_i+\frac{1}{2}) > \sum_{i=1}^r d_i + 1 = 2g(K)+1.
\]

When $r=3$, then
\[
	\det(K) = 8\prod_{i=1}^3 (d_i+\frac{1}{2}) \left| 1- \sum_{i=1}^3 \frac{d_i}{2d_i+1} \right|.
\]
If $1 \geq \sum_{i=1}^{3} \frac{d_i}{2d_i+1}$, then necessarily $d_1=d_2=d_3=1$ and $\det(K) = 0$.  However $M(\frac{1}{3},\frac{1}{3},\frac{1}{3}\vert1)$ is a two component link rather than a knot.

Otherwise, at least one of $d_1, d_2, d_3$ is greater than $1$.  Thus
$ \left| 1- \sum_{i=1}^3 \frac{d_i}{2d_i+1} \right| \geq \frac{1}{15}$ and $\det(K)  \geq \frac{8}{15}\prod_{i=1}^3 (d_i+\frac{1}{2})$ with equality only when $\{d_1, d_2, d_3\} = \{1,1,2\}$.

If $d_1, d_2, d_3 \geq 2$ or if $d_1=1$ and $d_2, d_3\geq 3$, then
\[
	\det(K) > \frac{8}{15}\prod_{i=1}^3 (d_i+\frac{1}{2}) > \sum_{i=1}^3 d_i +1 =2g(K)+1.
\]
Otherwise, the triple $\{d_1, d_2, d_3\}$ is one of  $\{1, 1, 2\}$, $\{1, 1, 3\}$, $\{1, 2, 2\}$ or $\{1, 2, 3\}$. The triples  $\{1, 1, 3\}$ and $\{1,2, 2\}$ correspond with two component links rather than knots.  An explicit computation for the remaining two triples finishes the proof.  For the triple $\{1, 2, 3\}$, we have $\det(K) = 17$ while $2g(K)+1 = 7$.   For the triple $\{1, 2, 3\}$, we have $\det(K) = 3$ while $2g(K)+1 = 5$.  This last one gives the knot stated in the theorem.   
\end{proof}

\begin{proposition}\label{prop:nooddlspace}
No odd type  Montesinos knot (of length $r\geq3$) is an L-space knot.
\end{proposition}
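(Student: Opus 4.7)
The plan is to combine the preceding structural results with a short Alexander polynomial check. Since an L-space knot is fibered and supports the tight contact structure on $S^3$, Proposition~\ref{prop:correctform} forces any odd-type L-space Montesinos knot of length $r\geq 3$ to have the form
\[
K = M\!\left(\tfrac{-d_1}{2d_1+1},\ldots,\tfrac{-d_r}{2d_r+1}\,\Big|\,1\right),
\]
with $d_1,\ldots,d_r$ positive integers summing to an even number. L-space knots also obey the determinant-genus inequality $\det(K)\leq 2g(K)+1$, so Lemma~\ref{lem:detodd} immediately cuts the list of candidates down to the single knot $K_0$ identified there, namely the case $\{d_1,d_2,d_3\}=\{1,1,2\}$, for which $g(K_0)=2$ and $\det(K_0)=3$.

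It then suffices to verify that $K_0$ is not an L-space knot. The plan is to invoke the Ozsv\'ath-Szab\'o constraint that the Alexander polynomial of an L-space knot is symmetric with nonzero coefficients $\pm 1$ alternating in sign. A direct enumeration shows that the only such polynomials of breadth at most $4$ with leading term $t^2$ are
\[
t^2 - 1 + t^{-2} \qquad \text{and} \qquad t^2 - t + 1 - t^{-1} + t^{-2},
\]
whose determinants are $1$ and $5$, respectively. Since neither matches $\det(K_0)=3$, the Alexander polynomial of $K_0$ cannot have L-space form, and $K_0$ is not an L-space knot.

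The main obstacle, to the extent there is one, is only this last verification. An alternative route, should a more intrinsic argument be desired, is to compute $\Delta_{K_0}(t)$ explicitly from the continued-fraction data of $K_0$ using the Conway polynomial recipe for Montesinos links, and then to observe directly that its nonzero coefficients fail to form an alternating sequence of $\pm 1$'s. Either approach is purely computational, and the proposition follows.
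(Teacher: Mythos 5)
Your proposal is correct and follows the same skeleton as the paper's proof: fiberedness and support of the tight contact structure reduce to the family of Proposition~\ref{prop:correctform}, and the determinant--genus inequality via Lemma~\ref{lem:detodd} leaves the single candidate $\{d_1,d_2,d_3\}=\{1,1,2\}$. The only difference is in how that last knot is killed. The paper identifies it as $10_{145}$ and writes down $\Delta(t)=t^2+t-3+t^{-1}+t^{-2}$, whose coefficient $-3$ violates the Ozsv\'ath--Szab\'o condition; you instead observe that an L-space knot of genus $2$ has Alexander polynomial $t^2-1+t^{-2}$ or $t^2-t+1-t^{-1}+t^{-2}$, hence determinant $1$ or $5$, which is incompatible with $\det(K_0)=3$. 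Your enumeration is valid (fiberedness forces $\deg\Delta=g=2$ with leading coefficient $+1$, and symmetry plus alternating $\pm1$ coefficients leaves exactly those two polynomials), and it has the small advantage of using only the data already computed in Lemma~\ref{lem:detodd}, with no need to identify the knot in a table or compute its Alexander polynomial. Both arguments are purely computational at this stage and equally conclusive.
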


\begin{proof}
Since L-space knots are fibered knots that support the tight contact structure and satisfy the determinant-genus bound, Lemma~\ref{lem:detodd} identifies $M(\tfrac{1}{3}, \tfrac{1}{3}, \tfrac{2}{5} \vert1)$ as the single candidate for an L-space knot among the odd type Montesinos knots (since we assumed throughout that $r\geq3$).  This knot may be identified as the knot $10_{145}$ in the Rolfsen table.  Its  Alexander polynomial is $\Delta_{10_{145}(t)} = t^2+t-3+t^{-1}+t^{-2}$. This fails the Alexander polynomial condition for L-space knots.
\end{proof}

%%%%%%%%%%%%%%%%%%%%%%%%%%%%%%%%%%%
%%%%%%%%%%%%%%%%%%%  EVEN  %%%%%%%%%%%%
%%%%%%%%%%%%%%%%%%%%%%%%%%%%%%%%%%%

\subsection{Even fibered Montesinos knots}

\begin{proposition}
\label{prop:eventight}
Let $K$ be an even fibered Montesinos knot supporting the tight contact structure. Then for some set of positive integers $m_1, \dots, m_r$ with $r\geq 3$,
\[
	K = M( \tfrac{-m_1}{-m_1+1},\dots, \tfrac{-m_r}{-m_r+1} \vert 2 ).
\]
Moreover, the fiber of $K$, shown on the right side of Figure \ref{fig:posfiberedmonty}, is a positive Hopf plumbing.
\end{proposition}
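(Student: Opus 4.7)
The plan is to mirror the structure of the proof of Proposition~\ref{prop:correctform}, but now invoking Theorem~3.2 of Hirasawa-Murasugi in place of Theorem~3.1. Write $K=M(\tfrac{\beta_1}{\alpha_1}, \dots, \tfrac{\beta_r}{\alpha_r} \vert e)$ with $\alpha_1$ even and $\alpha_2,\dots,\alpha_r$ odd. By E(II) and E(III) (the even analogs of the odd-type conditions), each $\beta_i/\alpha_i$ admits a strict even continued fraction expansion $S_i = [2\aji, \bji, \dots, 2a^{(i)}_{q_i}, b^{(i)}_{q_i}]$ with $|\aji|\in\{1,2\}$ and with sign constraints on the leading coefficient $a_1^{(i)}$, and the fiber $F$ is described by the analog of Figure~10 of \cite{HM-monty}. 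As in Proposition~\ref{prop:correctform}, assume $F$ contains no negative Hopf bands; Lemma~\ref{lem:obvioushopf} applied to the $e$ twist region and each $\bji$ twist region immediately forces the sign pattern analog of $(\ast)$, which in the even setting will pin down $e = +2$ (rather than $+1$), reflecting the different parity contribution of $\alpha_1$.

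Next, I would prove two claims analogous to Claim~\ref{claim:aij1} and Claim~\ref{claim:aij2}, treating $i=1$ (the even tangle) separately from $i\geq 2$ (odd tangles). For each $\aji$ with $|\aji|=1$ or $|\aji|=2$, I would exhibit an explicit isotopy of the local picture displaying a negative Hopf band whenever the sign of $\aji$ is wrong, exactly as in Figures~\ref{fig:baaB} and \ref{fig:BAAAAB} but adapted to the even-type local model near the $i=1$ tangle. Combined with the strictness of $S_i$ and the relevant E-conditions, this forces $\aji=-1$ or $\aji=-2$ with the paired $\bji$ of the prescribed sign. Sweeping up these constraints together with the equivalence $[\dots,-4,-1] = [\dots,-2,+1]$ and the collapse shown in Figure~\ref{fig:212b_to_b1}, every $S_i$ reduces to a single coefficient expansion so that $\beta_i/\alpha_i = \tfrac{-m_i}{-m_i+1}$ for some positive integer $m_i$, yielding the stated form.

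Finally, to verify that the resulting knot actually supports the tight contact structure, I would run the same positive-Hopf-plumbing/deplumbing argument as at the end of Proposition~\ref{prop:correctform}. Starting from the fiber on the right of Figure~\ref{fig:posfiberedmonty}, Lemma~\ref{lem:obvioushopf} allows me to successively deplumb positive Hopf bands to reduce each $m_i$ down to $1$; this leaves an even-type Montesinos link with all rational parameters $\tfrac{-1}{0}$-adjacent standardized tangles, from which the analog of Figure~\ref{fig:oddpretzel} displays further positive Hopf bands to deplumb one factor at a time. After $r-1$ steps the result is a single positive Hopf band, so reversing the process exhibits $F$ as a positive Hopf plumbing and, by Lemma~\ref{lem:OTmarkers}, $K$ supports the tight contact structure. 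A brief parity/primality check on $m_1, \dots, m_r$ (analogous to the calculation that $d_1+\dots+d_r$ must be even in the odd case) ensures that $K$ is a knot rather than a multi-component link.

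The main obstacle I anticipate is keeping the bookkeeping of Hirasawa-Murasugi's even-type conditions straight: E(II) and E(III) subdivide based on whether the distinguished even tangle $i=1$ is the leading or a later index, and on the parity of $b_1^{(1)}$, so the proofs of the two analog claims will have more subcases than in the odd case. The isotopies uncovering a negative Hopf band near $i=1$ when $|a_j^{(1)}|$ has the wrong sign, and especially near the $e=2$ twist region, will require separate local pictures distinct from Figures~\ref{fig:baaB}--\ref{fig:BAAAAB}; getting these right is the technical heart of the argument, and the rest is then essentially parallel to the odd case.
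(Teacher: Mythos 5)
There is a genuine gap, and it comes in two parts. First, your starting point misreads Hirasawa--Murasugi's Theorem~3.2: for even type Montesinos knots the relevant expansions are \emph{even} continued fractions $S_i = [2c_1^{(i)}, 2c_2^{(i)}, \dots, 2c_{m_i}^{(i)}]$ in which every coefficient is even, not the strict expansions $[2a_1^{(i)}, b_1^{(i)}, \dots, 2a_{q_i}^{(i)}, b_{q_i}^{(i)}]$ of the odd case, and the fiber is the one in Figures~12 and 13 of \cite{HM-monty}, not an analog of Figure~10. Consequently the planned analogs of Claims~\ref{claim:aij1} and \ref{claim:aij2} are organized around a combinatorial structure that simply is not present; the actual argument is shorter, since once $e=2$ is known, condition E(I) together with Lemma~\ref{lem:OTmarkers} (and Lemma~\ref{lem:twobridge}) forces $c_j^{(i)}=-1$ for all $i,j$, i.e.\ $S_i=[-2,\dots,-2]$ and $\beta_i/\alpha_i = \tfrac{-m_i}{m_i+1}$, with the fiber then visibly a plumbing of the positive Hopf band from the $e=2$ twists with a linear chain of $m_i$ positive Hopf bands for each $i$ --- no deplumbing cascade or Figure~\ref{fig:212b_to_b1} manipulation is needed.

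Second, and more seriously, you never handle $e=0$. In the even type $e$ is only constrained to be even, so $e=0$ is allowed a priori, and when $e=0$ there is no twist region for Lemma~\ref{lem:obvioushopf} to act on and no negative Hopf band need be visible; your assertion that the deplumbing lemma ``will pin down $e=+2$'' therefore does not address this case at all (nor, for $e\neq 0$, does it alone exclude $e\geq 4$ --- that exclusion comes from the fiberedness condition E(I)). The paper rules out $e=0$ with a genuinely different tool: for $r\geq 3$, conditions E(II-i) and E(III) produce adjacent coefficients with $(c_1^{(i)},c_1^{(i+1)})=\pm(1,-1)$, which yields a $0$--framed unknotted curve through the corresponding bands of the fiber, i.e.\ a twisting loop, and Yamamoto's criterion (the third bullet of Lemma~\ref{lem:OTmarkers}) then shows the supported contact structure is overtwisted. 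Without this step, or some replacement for it, your argument does not establish $e=2$ and hence does not reach the stated normal form.
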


\begin{proof}
Let $K$ be a fibered Montesinos knot of even type. 
By Theorem 3.2 of Hirasawa-Murasugi \cite{HM-monty}, we may assume that $e$ is even and each $\beta_i/\alpha_i$ has an even continued fraction expansion
\[
	S_i = [ 2c_1^{(i)}, 2c_2^{(i)}, 2c_3^{(i)}, \dots, 2c_{m_i}^{(i)} ].
\]
 The fiber $F$ of $K$ appears as in Figure 12 or Figure 13 of \cite{HM-monty} depending on whether $e\neq0$ or $e=0$ respectively.
(Since $K$ is a knot, we may further assume that $m_1$ is odd and $m_i$ is even for $i>1$, but we will not use this.  The following arguments apply equally well to any fibered Montesinos link with fiber $F$ as in Figure 12 or Figure 13 of \cite{HM-monty}; indeed the proof of the conditions for fiberedness in Theorem 3.2 of \cite{HM-monty} do not rely upon the connectedness of $K$.)

Assume $e=0$. 
Then since we assume $r\geq3$,  conditions E(II-i) and E(III) guarantee there exists an index $i$ such that $(c_1^{(i)},c_1^{(i+1)}) = \pm(1,-1)$.   Then, referring to Figure 13 of \cite{HM-monty} for the surface $F$, there is an unknot $C$ in $F$ running once through each of the bands corresponding to $c_1^{(i)}$ and $c_1^{(i+1)}$ such that $C$ is $0$--framed by $F$.  In particular $C$ is a twisting loop in $F$. By Lemma \ref{lem:OTmarkers} any such knot supports an overtwisted contact structure, and thus we conclude $e \neq 0$.

Since $e\neq0$ and $K$ is fibered, E(I) implies that $e=\pm2$. Referring to Figure 12 of \cite{HM-monty} for the surface $F$, it is readily apparent that $F$ contains a negative Hopf band if $e=-2$ and a positive one if $e=2$.  Due to Lemma \ref{lem:OTmarkers}, we must have $e=2$.

Condition E(I) with Lemma \ref{lem:OTmarkers} (and Lemma~\ref{lem:twobridge} and its proof) then further imply
 $c_j^{(i)}=-1$ for all $i,j$ so that for each $i$ we have the continued fraction $S_i = [-2,-2, \dots, -2]$ of length $m_i$.  This in turn implies $\frac{\beta_i}{\alpha_i}=\frac{-m_i}{m_i+1}$. Thus we have the fibered Montesinos knot  
 \[
	K = M( \tfrac{-m_1}{m_1+1},\dots, \tfrac{-m_r}{m_r+1} \vert 2 )
\]
for some set of positive integers $m_1, \dots, m_r$, which appears as in the right side of Figure \ref{fig:posfiberedmonty}. Furthermore the fiber $F$ may be assembled by beginning with the positive Hopf band corresponding to the $e=2$ twists and then plumbing on to it a linear chain of $m_i$ positive Hopf bands for each $i$.  Hence the fibered knot is a positive Hopf plumbing and supports the tight contact structure.
\end{proof}

\begin{lemma}\label{lem:montypretz}
We have the following equivalence between Montesinos links and pretzel links.
For any positive integers $m_1, \dots, m_r$, 
\[M( \tfrac{-m_1}{m_1+1},\dots, \tfrac{-m_r}{m_r+1} \vert 2 )= P(m_1+1, \dots, m_r+1,\underbrace{-1,\dots , -1}_{r-2})\]
\end{lemma}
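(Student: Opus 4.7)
The plan is to exhibit the equality by chaining two elementary tangle-calculus identities for Montesinos links.

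The first identity, which I will call \emph{integer-twist absorption}, states that for any integers $n_1, \dots, n_r$,
\[
M\!\left(\tfrac{\beta_1}{\alpha_1}, \dots, \tfrac{\beta_r}{\alpha_r} \,\Big|\, e\right) = M\!\left(\tfrac{\beta_1}{\alpha_1} + n_1, \dots, \tfrac{\beta_r}{\alpha_r} + n_r \,\Big|\, e - {\textstyle\sum_i n_i}\right).
\]
This reflects the standard fact from Conway's tangle calculus that adding $n$ half-twists at one end of a rational tangle shifts its fraction by $n$, and that such twists may be freely transferred into (or out of) the ambient integer-twist region of the Montesinos presentation without changing the resulting link. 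The second identity, relating Montesinos to pretzel notation, is
\[
M\!\left(\tfrac{1}{a_1}, \dots, \tfrac{1}{a_n} \,\Big|\, 0\right) = P(a_1, \dots, a_n),
\]
which holds because the rational tangle of fraction $1/a$ is exactly a vertical twist region with $a$ half-twists, i.e.\ a pretzel strand.

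First I would apply absorption with $n_i = 1$ for every $i$, using $\tfrac{-m_i}{m_i+1} + 1 = \tfrac{1}{m_i+1}$, to rewrite
\[
M\!\left(\tfrac{-m_1}{m_1+1}, \dots, \tfrac{-m_r}{m_r+1} \,\Big|\, 2\right) = M\!\left(\tfrac{1}{m_1+1}, \dots, \tfrac{1}{m_r+1} \,\Big|\, 2-r\right).
\]
Next, since $2-r = -(r-2)$, I would realize the residual integer twist as $r-2$ parallel rational tangles each of fraction $-1 = 1/(-1)$, and then invoke the pretzel identity to conclude
\[
M\!\left(\tfrac{1}{m_1+1}, \dots, \tfrac{1}{m_r+1} \,\Big|\, 2-r\right) = P\!\left(m_1+1, \dots, m_r+1, \underbrace{-1, \dots, -1}_{r-2}\right).
\]
Chaining these two equalities gives the lemma.

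The only delicate point is fixing the sign conventions in the rational-tangle-to-fraction dictionary so that both displayed identities come out precisely as stated; once that is pinned down, each step is a local planar isotopy verified by inspection of the standard diagrammatic presentations (and can be corroborated numerically via the determinant formula of Lemma~\ref{lem:determinant}, which is manifestly invariant under both moves). The main ``obstacle'' is therefore routine diagrammatic bookkeeping rather than anything substantive.
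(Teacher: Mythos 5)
Your proof is correct and takes essentially the same route as the paper: the paper also uses $\tfrac{-m_i}{m_i+1} = -1 + \tfrac{1}{m_i+1}$ to rewrite the link as the pretzel $P(-1, m_1+1, \dots, -1, m_r+1, 1, 1)$ and then cancels the $\pm 1$ strands by flypes, which is just your integer-twist absorption carried out diagrammatically. The only point needing care is the sign convention, which you flag, and it is indeed the one under which $M(\tfrac{1}{a_1},\dots,\tfrac{1}{a_n} \,|\, e)$ is the pretzel link with $e$ additional $\pm1$ strands, as the paper's own rewriting confirms.
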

\begin{proof}
Since $\tfrac{-m_i}{m_i+1} = -1+\tfrac{1}{m_i+1}$, the Montesinos link $M( \tfrac{-m_1}{m_1+1},\dots, \tfrac{-m_r}{m_r+1} \vert 2 )$ is the pretzel link $P(-1, m_1+1, \dots, -1, m_r+1,1, 1)$ as shown in the right side of Figure~\ref{fig:posfiberedmonty}. By flype moves, $P(-1, m_1+1, \dots, -1, m_r+1,1, 1)$ is isotopic to the pretzel link $P(m_1+1, \dots, m_r+1,\underbrace{-1,\dots , -1}_{r-2})$.
\end{proof}

\begin{proposition}\label{prop:evenLspace}
The only even type Montesinos knots (of length $r \geq 3$) that are L-space knots are the pretzel knots $P(-2, 3, 2n+1)$ for integers $n\geq0$.
\end{proposition}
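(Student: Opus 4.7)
I would follow stages (3)--(4) of the general strategy described in Section~\ref{sec:strategy}. Proposition~\ref{prop:eventight} has already completed stages (1)--(2), reducing the search to the family
\[
K_{\vec m}=M\!\left(\tfrac{-m_1}{m_1+1},\dots,\tfrac{-m_r}{m_r+1}\,\Big|\,2\right),
\]
where the knot condition forces $m_1$ odd and $m_2,\dots,m_r$ even, with each $m_i\ge 1$. These are the only candidates for length-$r\ge 3$ even-type L-space Montesinos knots.

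First I would extract the basic invariants. Counting Hopf bands in the plumbing (one for the $e=2$ band plus $m_i$ more for each tangle) gives $\chi(F)=-\sum_i m_i$, hence $2g(K_{\vec m})+1=\sum_{i=1}^{r}m_i+2$. Lemma~\ref{lem:determinant} yields
\[
\det(K_{\vec m})=\left|\,\sum_{i=1}^{r}\prod_{j\ne i}(m_j+1)\;-\;(r-2)\prod_{i=1}^{r}(m_i+1)\,\right|.
\]

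Next, the determinant-genus inequality $\det\le 2g+1$ would be used to cull candidates. For $r\ge 4$ I would factor out $\prod(m_i+1)$ and use $\sum 1/(m_i+1)\le \tfrac12+\tfrac{r-1}{3}$ to obtain $\det(K_{\vec m})\ge \tfrac{4r-13}{6}\prod_i(m_i+1)$; monotonicity of the ratio $\prod(m_i+1)/(\sum m_i+2)$ in each $m_i$ reduces the comparison with $\sum m_i+2$ to its minimum at $m_1=1,\,m_i=2$ $(i\ge 2)$, where it holds with plenty of room. Thus no length-$\ge 4$ knot is an L-space knot. For $r=3$ I set $a=m_1+1$ (even, $\ge 2$), $b=m_2+1,\,c=m_3+1$ (odd, $\ge 3$), with $b\le c$. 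Using the identity $(a-1)(b-1)(c-1)=abc-ab-bc-ca+(a+b+c-1)$, the inequality reduces either to the tautology $0\le(a-1)(b-1)(c-1)$ (when $\tfrac1a+\tfrac1b+\tfrac1c\ge 1$) or to the bounded condition $(a-1)(b-1)(c-1)\le 2(a+b+c-1)$ (when $\tfrac1a+\tfrac1b+\tfrac1c<1$). A short bounded search respecting the parity constraints produces the complete list of surviving triples: the infinite family $(2,3,c)$ with $c\ge 3$ odd, together with exactly four exceptional triples $(2,5,5),\,(2,5,7),\,(4,3,3),\,(6,3,3)$.

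Applying the Montesinos identity $M(\tfrac{\beta_i}{\alpha_i}\mid e)=M(\tfrac{\beta_i+\alpha_i}{\alpha_i}\mid e-1)$ twice to $K_{(1,2,2n)}$ identifies the infinite family with $M(-\tfrac12,\tfrac13,\tfrac{1}{2n+1}\mid 0)=P(-2,3,2n+1)$, already known to be an L-space knot by \cite{lidmanmoore}. For each of the four exceptional candidates---by Lemma~\ref{lem:montypretz} these are the pretzel knots $P(2,5,5,-1)$, $P(2,5,7,-1)$, $P(4,3,3,-1)$, and $P(6,3,3,-1)$---I would compute the Alexander polynomial directly from a Seifert matrix read off the Hopf-band plumbing (or from a pretzel formula) and verify in each case that the non-zero coefficients fail to be $\pm 1$ alternating in sign, excluding them as L-space knots by \cite{OS-lens}. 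The main obstacle I foresee is the bookkeeping in the $r=3$ bounded search, particularly keeping track of the parity constraints on $(a,b,c)$; once the four exceptions are isolated, the Alexander-polynomial check is a mechanical computation, so stage (5) of the strategy is never needed.
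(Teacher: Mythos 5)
Your proposal is correct, but it is not the route the paper takes for this proposition. The paper's proof is a short reduction: Proposition~\ref{prop:eventight} limits the candidates, Lemma~\ref{lem:montypretz} identifies each candidate $M(\tfrac{-m_1}{m_1+1},\dots,\tfrac{-m_r}{m_r+1}\mid 2)$ with the pretzel knot $P(m_1+1,\dots,m_r+1,-1,\dots,-1)$, and then the classification of L-space pretzel knots from \cite{lidmanmoore} is invoked wholesale, with the isotopy $P(2,3,2n+1,-1)\cong P(-2,3,2n+1)$ finishing the argument. What you propose is precisely the alternative the authors sketch in the Remark immediately following the proposition: run stages (3)--(4) of the general strategy, culling with $\det(K)\le 2g(K)+1$ (your genus count $2g+1=\sum_i m_i+2$ from the Hopf-band plumbing and your determinant formula via Lemma~\ref{lem:determinant} are both correct) and then eliminating the finitely many survivors by the Alexander polynomial condition of \cite{OS-lens}. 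Your $r\ge 4$ estimate and your $r=3$ bounded search check out; indeed your surviving list is slightly sharper than the one in the paper's Remark, which also lists $(m_1,m_2,m_3)=(3,2,4)$ even though that knot has $\det=13>11=2g+1$ and is already culled by the determinant (harmless there, since the Remark discards it by its Alexander polynomial anyway). The trade-off between the two routes: the paper's argument is three sentences but leans entirely on the pretzel classification of \cite{lidmanmoore}; yours is longer and ends with four mechanical Alexander polynomial computations that you assert rather than carry out, but it needs \cite{lidmanmoore} only for the positive fact that $P(-2,3,2n+1)$ admits L-space surgeries, so it makes the even-type case self-contained and structurally parallel to the odd-type argument via Lemma~\ref{lem:detodd} and Proposition~\ref{prop:nooddlspace}. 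To turn your sketch into a complete proof you would need to record the four Alexander polynomials (or cite the relevant computation), but that is a finite check and not a gap in the method.
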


\begin{proof}
Since L-space knots must be fibered knots that support the tight contact structure, Proposition~\ref{prop:eventight} restricts the candidates for L-space knots among the even type Montesinos knots to those of the form $M( \tfrac{-m_1}{-m_1+1},\dots, \tfrac{-m_r}{-m_r+1} \vert 2 )$ for positive integers $m_1, \dots, m_r$ with $r \geq 3$.  Lemma~\ref{lem:montypretz} shows that these Montesinos knots are actually pretzel knots.  According to \cite{lidmanmoore}, the only L-space pretzel knots (that are not two-bridge knots) are the pretzel knots $P(-2,3,2n+1)$ for integers $n\geq0$.  Noting that $P(-2,3,2n+1)$ and $P(2,3,2n+1,-1)$ are isotopic pretzel knots completes the proof.
\end{proof}

\begin{remark}
One could obtain Proposition~\ref{prop:evenLspace} without appealing to \cite{lidmanmoore}.  Proceeding with the general procedure as we did for the odd type Montesinos knots, one may cull the knots from Proposition~\ref{prop:eventight} with the determinant-genus relation.  This shows that if $K$ is an even fibered Montesinos knot supporting the tight contact structure such that $\det(K) \leq 2g(K)+1$, then $K= M \left( \tfrac{-m_1}{m_1+1}, \tfrac{-m_2}{m_2+1}, \tfrac{-m_3}{m_3+1} \vert 2 \right)$ where $(m_1,m_2,m_3)  \in \{(5,2,2),(3,2,2),(3,2,4),$ $(1,4,4),(1,4,6),(1,2,2c)\}$ for positive integers $c$.   The Alexander polynomials of these first five of these knots have coefficients that are greater than one in absolute value; hence those knots cannot be L-space knots.  The remaining family of knots is the desired family of pretzel knots $P(-2,3,2n+1)$ for non-negative integers $n$.
\end{remark}

%%%%%%%%%%%%%%%%%%%%%%%
%%%%%%%  ESSENTIAL TANGLE DECOMPOSITIONS  %%%%%%%%%%%
%%%%%%%%%%%%%%%%%%%%%%%

\section{On essential tangle decompositions of L-space knots}\label{sec:esstangles}
A knot $K$ in  $3$--manifold  $Y$ has an {\em essential $n$--string tangle decomposition} if there is an embedded sphere $Q$ that transversally intersects $K$ in $2n$ points such that the planar surface $Q -\bdry \nbhd(K)$ is essential in the knot exterior $Y-\nbhd(K)$, i.e.\ $Q -\bdry \nbhd(K)$  is both incompressible and $\bdry$--incompressible.  If $K$ has no essential $n$--string tangle decomposition, then $K$ is called {\em $n$--string prime}. 

Observe that a $1$--string prime knot is simply a prime knot.  
\begin{theorem}[Krcatovich \cite{krcatovich}]
 L-space knots are prime knots.  
\end{theorem}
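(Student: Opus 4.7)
The plan is to combine the K\"unneth formula for the knot Floer complex with the rigid structure of $\cfki$ for an L-space knot. Recall Ozsv\'ath and Szab\'o's characterization: $K$ is an L-space knot if and only if $\cfki(K)$ is filtered chain homotopy equivalent to a \emph{staircase complex}, whose generators are in bijection with the non-zero coefficients of $\Delta_K(t)$ and whose differentials run purely horizontally and vertically between consecutive generators. In particular, for each Alexander grading $s$, $\hfk(K,s)$ has rank $0$ or $1$, with non-zero gradings matching the support of $\Delta_K$.

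Suppose for contradiction that $K = K_1 \connectsum K_2$ is an L-space knot with both $K_i$ non-trivial. The K\"unneth formula gives
\[
\cfki(K) \simeq \cfki(K_1) \otimes \cfki(K_2)
\]
as $\Z \oplus \Z$--filtered chain complexes over $\F[U, U^{-1}]$. Reading this off at the top Alexander grading $s = g(K_1) + g(K_2)$ and comparing ranks downward forces each $\hfk(K_i, s)$ to also have rank at most $1$, from which one concludes that each $K_i$ is itself an L-space knot (this step may alternatively appeal to Hom's $\varepsilon$--invariant).

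The core of the argument is then to show that the tensor product of two non-trivial staircases cannot be filtered chain homotopy equivalent to any staircase. This is cleanest in Krcatovich's \emph{reduced} knot Floer complex, obtained from $\cfki$ by cancelling all length-zero differentials, which distributes over connected sums and which for an L-space knot equals precisely its staircase. One verifies by direct inspection that after all possible cancellations inside the tensor product of two non-trivial staircases, there remain generators lying strictly inside the first quadrant of the $(i,j)$--filtration plane, incompatible with the outer-corner shape characterizing any staircase.

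The main obstacle is this final combinatorial step: carefully tracking which generators survive iterated cancellation inside the tensor complex is the technical heart of Krcatovich's paper. Granting it, primeness of L-space knots follows immediately, since no connected sum of two non-trivial knots can then be an L-space knot.
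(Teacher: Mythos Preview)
The paper does not actually prove this theorem; it is stated with a citation to Krcatovich and no argument is given. There is therefore no ``paper's own proof'' to compare your proposal against.

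As an independent sketch of Krcatovich's result, your outline has the right ingredients --- the K\"unneth formula for $\cfki$, the staircase model for L-space knots, and the reduced complex --- and correctly identifies where the real work lies. Two remarks are in order. First, your deduction that each summand $K_i$ is itself an L-space knot from the rank condition $\rk\hfk(K_i,s)\leq 1$ is not as automatic as you suggest: one must also pin down the Maslov gradings and verify that the spectral sequence to $\hf(S^3)$ forces the staircase shape, not merely the correct ranks. Second, Krcatovich's actual organization is somewhat different from yours: rather than first arguing that both summands are L-space and then comparing tensor products of staircases, he proves a general simplification lemma to the effect that tensoring with the reduced complex of an L-space knot and then reducing again returns (a grading-shifted copy of) the other factor's reduced complex. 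Primeness then follows directly, without the intermediate step of showing both $K_i$ are L-space. Your route is plausible but, as you acknowledge, defers exactly the hard combinatorial step to a black box.
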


A {\em Conway sphere} for a knot (or link) is a sphere $Q$ transversally intersecting the knot in $4$ points.
Hence a $2$--string prime knot is a knot without any essential Conway spheres.   Thus the first conjecture from the introduction may be rephrased as follows:

\begin{conjecture}[Lidman-Moore \cite{lidmanmoore}]\label{conj:essconway}
L-space knots are $2$-string prime.
\end{conjecture}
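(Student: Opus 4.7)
The plan is to argue by contradiction. Suppose $K$ is an L-space knot admitting an essential Conway sphere $Q$; choose $Q$ to minimize the geometric intersection with a minimal-genus Seifert surface $F$ for $K$, and let $(B_1,T_1),(B_2,T_2)$ be the two resulting essential two-string tangles. Since L-space knots are prime \cite{krcatovich}, neither tangle is a non-trivial connect-summand. The goal is to derive a contradiction from the combined restrictions that $K$ is fibered, that its open book supports the tight contact structure (so $F$ contains neither a negative Hopf band nor a twisting loop, by Lemma~\ref{lem:OTmarkers}), that the non-zero coefficients of $\Delta_K(t)$ are $\pm 1$ and alternate in sign, and that $\det(K)\leq 2g(K)+1$.

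First I would pass to the double branched cover $\Sigma_2(K)$, in which $Q$ lifts to an essential torus $T\subset\Sigma_2(K)$. The L-space surgery $S^3_r(K)$ lifts to a positive surgery on the preimage $\tilde K$, so via the branched-cover correspondence $\Sigma_2(S^3_r(K))$ is an L-space still containing (a descendant of) $T$. Using the known characterization of toroidal L-spaces as graph-manifold-type gluings whose Seifert-fibered JSJ pieces satisfy a slope-compatibility condition, one should obtain strong restrictions on the pieces of $\Sigma_2(K)$ adjacent to $T$; in particular, both pieces ought to be Seifert fibered with restricted base orbifolds, and the gluing slopes should be highly constrained.

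On the $S^3$ side I would analyze the sutured decomposition of $F$ cut along $F\cap Q$ and the induced tangle-level sutured structures on $(B_i,T_i)$. The absence of negative Hopf bands and twisting loops, inherited from the tight contact condition, limits how positive bands may be plumbed across $Q$. The plan is to combine this sutured-manifold restriction with the Seifert-fibered character of the JSJ pieces of $\Sigma_2(K)$ to force one of $(B_1,T_1),(B_2,T_2)$ to be a rational tangle, contradicting essentiality of $Q$. As an independent line of leverage, a tangle-pairing formula for $\hfk$ should let one compute $\hfk(K)$ from tangle Floer invariants of $(B_i,T_i)$; the $\pm 1$--alternating Alexander polynomial together with the narrow ``staircase'' shape of $\hfk(K)$ for an L-space knot should push the tangle invariants to extremal configurations, again forcing rationality of one of the tangles. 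In the degenerate situation where both tangles end up rational, $K$ is Montesinos and Theorem~\ref{thm:main} yields a direct contradiction.

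The main obstacle is that toroidal L-spaces genuinely exist, so the mere existence of $T\subset\Sigma_2(K)$ is not in itself incompatible with $K$ being an L-space knot. The delicate step is exploiting the particular origin of $T$ as the double cover of a Conway sphere, together with the positivity of $F$ coming from the tight contact structure, to rule out the toroidal L-space gluings that remain a priori allowed. Bridging the Heegaard Floer constraints on toroidal L-spaces with the sutured-manifold positivity constraints arising from a fibered, strongly quasipositive $K$, and concluding that $T_1$ and $T_2$ cannot both be non-rational and essential, is where the real difficulty lies.
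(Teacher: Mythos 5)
The statement you are trying to prove is not a theorem of this paper: it is Conjecture~\ref{conj:essconway}, attributed to Lidman--Moore, and it remains open. The paper offers no proof of it; its contribution is evidence, namely the classification in Theorem~\ref{thm:main} showing that generalizing from pretzels to Montesinos knots produces no L-space knots with essential Conway spheres. Your text is likewise not a proof but a program, and its load-bearing steps are either unjustified or known to fail as stated. Most seriously, the branched-cover step is broken: an essential Conway sphere for $K$ does lift to an essential torus in $\Sigma_2(K)$, but there is no ``branched-cover correspondence'' guaranteeing that the relevant double branched cover of the surgered manifold is an L-space --- double branched covers of L-spaces (or of L-space surgeries) need not be L-spaces, so you cannot feed the lifted torus into any toroidal-L-space machinery. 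Even granting an L-space containing an essential torus, the gluing-type results you allude to do not force the adjacent JSJ pieces to be Seifert fibered (they may be hyperbolic), so the claimed ``strong restrictions'' on $\Sigma_2(K)$ do not materialize.

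The second line of attack has the same character: there is no established tangle-pairing theorem for $\widehat{HFK}$ that converts the staircase shape of an L-space knot's knot Floer homology into rationality of one of the tangles $(B_i,T_i)$, and the sutured/positivity restrictions you cite (no negative Hopf bands, no twisting loops) have never been shown to interact with a Conway sphere in the way your outline requires. Note also that the ``degenerate'' case is misstated: if both tangles were rational the Conway sphere would already be inessential (and the resulting knot would be two-bridge, not a general Montesinos knot), so no appeal to Theorem~\ref{thm:main} is needed there --- the hard case is precisely when at least one side is neither rational nor splittable, and your outline defers exactly that case to ``where the real difficulty lies.'' In short, you have reproduced the reasons the conjecture is plausible, and correctly located the difficulty, but no step of the argument beyond the setup is actually carried out, so this cannot be compared to a proof in the paper --- none exists, because the statement is a conjecture.
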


Since Montesinos knots generalize pretzel knots, and those with length $r\geq 4$ have essential $2$--string tangle decompositions (for example, a sphere separating two adjacent factors from the remaining $r-2$ is an essential Conway sphere),  our Theorem~\ref{thm:main} lends further credence to this conjecture. However we suspect something stronger is true.

\begin{conjecture}\label{conj:nstringprime}
L-space knots are $n$--string prime for all integers $n>0$.  That is, 
L-space knots have no essential tangle decomposition.
\end{conjecture}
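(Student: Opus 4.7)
The plan is to leverage the structural properties of L-space knots established in the paper---fiberedness, strong quasipositivity (equivalently, support of the tight contact structure), and the Alexander polynomial constraint---against the topology imposed by an essential $n$--string tangle decomposition. Suppose $K \subset S^3$ is an L-space knot with fiber $F$, and suppose $Q$ is a sphere meeting $K$ transversally in $2n$ points such that the planar surface $P := Q \cap (S^3 - \nbhd(K))$ is essential. The first step is to isotope $P$ to meet $F$ minimally; incompressibility and $\bdry$--incompressibility, together with the fibration, force $P \cap F$ to consist of essential arcs and essential simple closed curves in $F$, with the number of arcs controlled by $n$ and the combinatorics constrained by the monodromy. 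Because $F$ is built by successive positive Hopf plumbings (as for the families in Theorem~\ref{thm:fibered}, and more generally by the strong quasipositivity of $K$), one hopes these arcs must respect the plumbing tree in a way that is eventually incompatible with $P$ being planar.

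The argument then naturally splits into a satellite case and a hyperbolic case. In the satellite case, an essential tangle decomposition sphere can be modified to produce an essential torus in the exterior of $K$. I would invoke the classification of satellite L-space knots as iterated cables of L-space knots (via results of Hedden and Hom) and show, by induction on JSJ complexity, that any essential $n$--string tangle decomposition of a cable descends to an essential tangle decomposition of the pattern or of the companion. This reduces the conjecture to the hyperbolic case and explains the discussion of satellite operations mentioned in Section~\ref{sec:esstangles}.

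For the hyperbolic case, I would pass to the double branched cover of $S^3$ along $K$, where $P$ lifts to a closed essential surface in a closed $3$--manifold. Since the branched double cover of many L-space knots is itself an L-space---and in any case carries strong Heegaard Floer constraints, together with a fibered structure inherited from the fibration of $K$ by Ni's theorem---one expects sharp bounds on the Thurston norm in terms of the genus of $K$. A closed essential lift of $P$ with $|P \cap \bdry \nbhd(K)| = 2n$ should then force either a compressibility or a norm violation, yielding the contradiction.

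The principal obstacle is the general $n$ case: the determinant-genus inequality and the Alexander polynomial condition both encode first-order data and do not obviously scale with $n$. Moreover, the construction in Section~\ref{sec:esstangles} of L-space knots with arbitrarily large tunnel number rules out any argument resting on a bridge-number or tunnel-number complexity bound. I expect the true obstruction must come from combining the positive Hopf plumbing structure of the fiber with a contact-geometric rigidity statement about how an essential planar surface can meet a plumbed fiber surface---something like an extension of Yamamoto's twisting-loop obstruction (Lemma~\ref{lem:OTmarkers}) from closed curves to properly embedded arcs in the fiber. Producing such a statement, or an equivalent Floer-theoretic substitute that is sensitive to the number of boundary components $2n$, is the main step that remains to be discovered.
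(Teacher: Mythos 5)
The statement you are trying to prove is Conjecture~\ref{conj:nstringprime}, which the paper does not prove; it is posed as an open problem, supported only by partial evidence (Theorem~\ref{thm:main} for Montesinos knots, Gordon--Reid's result that tunnel number one knots are $n$--string prime, and the satellite results of Hayashi--Matsuda--Ozawa and Hom--Lidman--Vafaee for braided and Berge--Gabai satellites). Your text is likewise not a proof but an outline, and you say so yourself in the final paragraph (``the main step that remains to be discovered''), so as it stands there is a genuine gap: the core mechanism --- an obstruction, contact-geometric or Floer-theoretic, to an essential planar surface meeting a positively plumbed fiber --- is exactly what is missing, and no known result plays that role.

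Beyond that, two of your intermediate steps assert things that are themselves open or false. First, you ``invoke the classification of satellite L-space knots as iterated cables of L-space knots''; no such classification exists. Hedden and Hom treat cables, and Hom--Lidman--Vafaee treat Berge--Gabai patterns, but whether every L-space satellite is even a braided satellite, and whether its companion must be an L-space knot, are posed in the paper as open questions --- so your satellite reduction rests on unproven input. Second, the double branched cover step does not deliver what you want: the branched double cover of $S^3$ along an L-space knot is generally not an L-space, it inherits no fibration from the fibration of the knot complement (Ni's theorem says $K$ is fibered, not that $\Sigma_2(K)$ fibers), and even granting that the lift of the planar surface is a closed essential surface, closed essential surfaces are compatible with being an L-space knot exterior --- the paper explicitly notes hyperbolic L-space knots with closed essential surfaces in their exteriors --- so no Thurston-norm or compressibility contradiction is forthcoming from rank or norm bounds alone. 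A correct treatment would need either the arc-version of Yamamoto's twisting-loop obstruction you speculate about, or some genuinely new Floer-theoretic input sensitive to the number of punctures $2n$; neither is available in the literature the paper cites.
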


Since an essential tangle decomposition of a knot can give rise to a closed essential surface in the knot's exterior, one may be tempted to conjecture that L-space knots have no closed essential surfaces in their exterior.  Let us note however, that there are hyperbolic L-space knots  for which this is not the case (indeed, ones with lens space surgeries, e.g.\ \cite{baker-closedessentialsurfaces}).  For non-hyperbolic examples, there are satellite L-space knots.

\subsection{Satellite operations}
Recall that for a satellite knot $K$, there is a knotted solid torus $V$ containing $K$  such that there is no isotopy of $K$ in $V$ to the core of $V$.  The core of $V$ is the companion of $K$, and the pair $(V,K)$ is the pattern of $K$.   If $K$ is braided in $V$, then we say $K$ is a {\em braided satellite} of the core of $V$. 

Hayahsi-Matsuda-Ozawa show that if there is no essential tangle decomposition of the pattern $(V,K)$, then any essential tangle decomposition of the satellite $K$ gives rise to an essential tangle decomposition of the core of $V$, \cite{hmo}.  In particular, this gives the following result for braided satellites.
\begin{theorem}[Theorem 4.1\cite{hmo}]\label{thm:braidsat}
A braided satellite of a knot with no essential tangle decomposition also has no essential tangle decomposition.
\end{theorem}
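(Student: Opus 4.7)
The plan is a standard companion-torus normalization argument: take an essential tangle sphere $Q$ for the satellite, minimize its intersection with the companion torus $\bdry V$, exploit the braid fibration of the pattern to pin down the slope of the intersection curves, and convert the normalized picture into an essential tangle decomposition of the companion $C$, contradicting the hypothesis.

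Concretely, suppose for contradiction that $K$ is braided in a knotted solid torus $V \subset S^3$ with core $C$ and braid index $b$, and that $Q$ realizes an essential $n$--string tangle decomposition of $K$. I would first dispose of trivial cases: if $b=1$ then $K$ is isotopic to $C$ and the statement is immediate, and if $V$ is unknotted then $C$ is the unknot and one handles it separately. So assume $b \geq 2$ and $V$ knotted. Then $\bdry V$ is both incompressible and $\bdry$--incompressible in $S^3 \setminus \nbhd(K)$: compressions from inside would give a meridian disk of $V$ disjoint from $K$, impossible when $b \geq 2$, and compressions from outside are ruled out by knottedness of $V$.

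Next, isotope $Q$ to minimize $|Q \cap \bdry V|$ among spheres realizing the decomposition, and write $Q^\ast = Q \setminus \nbhd(K)$. A routine innermost-disk and outermost-arc argument, using incompressibility and $\bdry$--incompressibility of both $Q^\ast$ and $\bdry V$, forces every circle of $Q \cap \bdry V$ to be essential on each surface; in particular they form a parallel family on $\bdry V$ with a common slope $\gamma$. The crux of the proof is to identify $\gamma$ as the meridian of $V$. For this I would invoke the braid fibration $V \setminus \nbhd(K) \to S^1$, whose fibers are $b$--punctured meridian disks: by a standard normal-form or double-curve-sum argument for essential surfaces in fibered manifolds, the planar components of $Q^\ast \cap V$ can be isotoped into unions of subsurfaces of these fibers, and any essential boundary curve on $\bdry V$ of such a subsurface is then forced to be a meridian of $V$. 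I expect this normal-form step to be the main technical obstacle, and it is precisely where the braided hypothesis is doing work, since being a fiber bundle with punctured disk fibers is what tightly constrains the essential planar surfaces in the pattern exterior.

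Once $\gamma$ is the meridian of $V$, each component of $Q^\ast \cap V$ is isotopic to a disjoint union of punctured meridian disks of $V$. Collapsing $V$ onto its core sends each such punctured meridian disk to a meridian disk of $C$ meeting $C$ in a single point, and the pieces of $Q^\ast$ outside $V$ glue onto these collapsed disks to produce a sphere $Q'$ meeting $C$ transversely in a positive even number of points. Essentialness of the original $Q^\ast$, combined with incompressibility of $\bdry V$, propagates to show that $Q' \setminus \nbhd(C)$ is incompressible and $\bdry$--incompressible in the exterior of $C$, giving an essential tangle decomposition of $C$ and contradicting the hypothesis.
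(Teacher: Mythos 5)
The paper does not actually prove this statement: it is quoted verbatim as Theorem~4.1 of Hayashi--Matsuda--Ozawa, and the paper's route to it is their more general theorem (if the pattern $(V,K)$ admits no essential tangle decomposition, then an essential tangle decomposition of the satellite yields one of the companion) together with the fact that a braided pattern admits no essential tangle decomposition. Your proposal is a direct proof attempt along the same swap-across-the-companion-torus lines, which is a reasonable plan, but the two steps carrying the real content are missing or rest on an unjustified principle. First, after minimizing $|Q\cap\partial V|$ you never address the case $Q\cap\partial V=\emptyset$. Since $Q$ must meet $K$, the sphere would then lie entirely in the interior of $V$; there are no intersection curves, no slope to identify, and your collapsed sphere misses $C$, so no contradiction is reached. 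Ruling this out is exactly the assertion that the braided pattern itself has no essential tangle decomposition --- the hypothesis in the Hayashi--Matsuda--Ozawa theorem that the paper emphasizes --- and it requires its own argument with the disk fibration (a sphere in $V$ bounds a ball in $V$, and braidedness must be used to produce a compression or $\partial$-compression of the punctured sphere).

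Second, the crux step identifying the slope $\gamma$ as the meridian of $V$ is justified by appeal to a ``standard normal-form or double-curve-sum argument'' that would isotope the pieces of $Q$ inside $V$ into unions of subsurfaces of the punctured-disk fibers. No such general principle exists: essential surfaces in fibered $3$--manifolds need not be isotopic into fibers and can even have boundary slopes different from the fiber slope (the figure-eight knot exterior is fibered with once-punctured-torus fiber yet contains essential surfaces of boundary slope $\pm 4$). A correct argument must exploit the planarity of the pieces and the fact that their boundary on $\partial\nbhd(K)$ is meridional; that is precisely the technical heart of the cited proof, and it is the part your sketch leaves unproven. Relatedly, the closing claim that the reassembled sphere $Q'$ gives an \emph{essential} decomposition of $C$ is asserted rather than argued --- incompressibility and $\partial$-incompressibility must be transferred across $\partial V$, and nontriviality of the resulting tangles checked --- though this step is more routine than the other two.
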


The Berge-Gabai knots are the knots in solid tori with non-trivial Dehn surgeries yielding solid tori \cite{bergeS1xD2,gabaiS1xD2}.  In particular, a satellite knot $K$ with one of these as its pattern has a non-trivial Dehn surgery that is equivalent to a Dehn surgery on its companion knot.  Thus, when done with the correct framings, the operation of taking Berge-Gabai satellites (which includes cabling) preserves the property of being L-space knots.  But a sharper statement can be made.

\begin{theorem}[Hom-Lidman-Vafaee \cite{HLV}]\label{thm:BGsat}
An L-space knot that is a Berge-Gabai satellite must have an L-space knot as its companion.
\end{theorem}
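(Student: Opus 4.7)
The plan is to exploit the defining feature of a Berge--Gabai pattern $(V,P)$: there is a slope $\alpha$ on $\bdry \nbhd(P)$ such that Dehn filling $V - \nbhd(P)$ along $\alpha$ yields a solid torus $V'$. Consequently, if $K = P(C)$ is the satellite with companion $C$, then $\alpha$-surgery on $K \subset S^3$ (measured with respect to the framing induced by $V$) replaces the solid torus $\nbhd(C)$ with $V'$, and is therefore homeomorphic to some Dehn surgery on $C$ along an induced slope $\beta$. The strategy is to transfer the L-space property from $K$ down to $C$ through this surgery correspondence.

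First I would identify the slopes $\alpha,\beta$ explicitly in terms of the Berge--Gabai parameters of $P$ (the $1$-bridge braid data) and the winding number of $P$ in $V$, converting $\alpha$ to its value as a slope on $K \subset S^3$ via the Seifert framing. The key step is to show that this $\alpha$ lies in the L-space interval $[2g(K)-1,\infty]$ of $K$, so that the hypothesis that $K$ is an L-space knot implies $\alpha$-surgery on $K$ is an L-space. Then by the surgery correspondence, $\beta$-surgery on $C$ is an L-space, and a sign check (using positivity of the winding number and of the L-space interval for $K$) shows that $\beta$ is itself a positive slope, certifying that $C$ is an L-space knot.

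The cleaner conceptual route, which is likely what Hom--Lidman--Vafaee actually pursue, is via bordered Heegaard Floer homology and the Hanselman--Rasmussen--Watson immersed-curve calculus. The knot Floer invariant of $K$ is obtained by pairing the type $D$ structure of $S^3 - \nbhd(C)$ (equivalently, the immersed curve representing $C$) with the type $A$ invariant of the pattern $(V,P)$. If $K$ is an L-space knot, then its immersed curve is a single staircase. Because $P$ is Berge--Gabai, its type $A$ module has a very constrained form coming from the solid-torus filling. The heart of the proof would be to show that the resulting pairing yields a staircase only if the input immersed curve for $C$ is already a staircase, i.e.\ only if $C$ is an L-space knot.

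The main obstacle is controlling the interaction between pattern and companion. In the surgery-theoretic approach the delicate point is step two: the Berge--Gabai slope $\alpha$ is rigidly determined by the pattern, and one must rule out its landing outside the L-space interval of $K$ (or use a bootstrapping argument and the fact that all sufficiently large surgeries on $K$ are L-spaces, combined with the surgery formula along the Berge--Gabai slope, to pin down $\beta$). In the bordered approach, the obstacle is proving the requisite injectivity of the pattern action on L-space-type immersed curves, which ultimately relies on the rigid combinatorial structure of $1$-bridge braid patterns in the solid torus.
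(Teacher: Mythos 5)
First, a point of comparison: the paper does not prove Theorem~\ref{thm:BGsat} at all --- it is quoted verbatim from Hom--Lidman--Vafaee \cite{HLV} --- so your proposal has to be judged against their argument rather than anything internal to this paper. Your first (surgery-theoretic) route does identify the correct structural input: the Berge--Gabai filling slope $\alpha$ of the pattern identifies $\alpha$-surgery on the satellite $K=P(C)$ with some surgery on the companion $C$. But the step you yourself call ``key'' --- that $\alpha$, rewritten in the Seifert framing of $K$, lies in the L-space interval $[2g(K)-1,\infty)$ --- cannot simply be ``shown'' at the outset, and as written your argument is circular. The slope $\alpha$ is rigidly fixed by the pattern, while $g(K)$ grows with the companion, so in general the special slope lies well below $2g(K)-1$; the inequality you need is essentially equivalent to the slope condition appearing in the actual iff theorem of \cite{HLV}. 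In the special case of cables this is transparent: ``cabling slope $pq \geq 2g(C_{p,q}(K'))-1$'' is algebraically equivalent to Hedden's and Hom's condition $q \geq p(2g(K')-1)$, which is precisely the nontrivial content of the cabling classification and is deduced there from knot Floer computations, not assumed. Hom--Lidman--Vafaee obtain both conclusions (companion is an L-space knot, and the slope inequality holds) from a Floer-theoretic analysis of surgeries on Berge--Gabai (one-bridge braid) complements glued to the companion exterior via the mapping cone/rational surgery machinery; the ``bootstrapping'' you invoke only moves L-space slopes upward on the same knot and cannot place the one fixed Berge--Gabai slope above $2g(K)-1$. So the decisive step of your plan is missing, not merely delicate.

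Your second route is not a repair. The immersed-curve/bordered pairing formalism postdates \cite{HLV} and is not their method, and in any case you give no mechanism for the claimed implication that a staircase output forces a staircase input for a Berge--Gabai type-$A$ module --- that assertion is the theorem itself restated in bordered language. The one part of your sketch that is unproblematic is the sign bookkeeping: the induced slope on $C$ is roughly $\alpha/w^2$ for winding number $w$, so positivity does transfer once the rest is in place. As it stands, the proposal is a sensible plan whose central step is unproven.
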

Previously, Hedden \cite{hedden-cable} and Hom \cite{hom} determined how the property of being an L-space knot behaves with respect to cabling.  Together Theorem~\ref{thm:braidsat} and Theorem~\ref{thm:BGsat} show that if a Berge-Gabai satellite knot is a counterexample to Conjecture~\ref{conj:nstringprime} then so is its companion.

\begin{question}
Let $K$ be an L-space satellite knot.
\begin{itemize}
\item Is $K$ a braided satellite?
\item Is the companion knot to $K$ also an L-space knot?
\end{itemize}
\end{question}

\subsection{L-space knots of large tunnel number}
Recall that the {\em tunnel number} of a knot $K$  is the minimum number of mutually disjoint,  embedded arcs intersecting  $K$ at their endpoints such that the exterior of the resulting $1$--complex is a handlebody.

The L-space knots identified in Theorem~\ref{thm:main} and all of the Berge knots \cite{berge} have tunnel number one, as do many other familiar L-space knots.  
 Gordon-Reid have shown that tunnel number one knots are $n$--string prime for all integers $n>0$ \cite{gordonreid}.   Thus  L-space knots with tunnel number one support Conjecture~\ref{conj:nstringprime}.

However there are L-space knots with greater tunnel number.  
Indeed, sufficiently large cables of  L-space knots are also  L-space knots \cite{hedden-cable}, yet tunnel number one cabled knots are only those which are certain cables of torus knots \cite{morimotosakuma-TN1satellite}.  More generally we show there are L-space knots of arbitrarily large tunnel number.

\begin{proposition}\label{prop:tn}
For any integer $N$, there is an L-space knot with tunnel number greater than $N$.
\end{proposition}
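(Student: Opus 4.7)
The plan is to iterate cabling starting from a torus knot, using Hedden's cabling theorem to preserve the L-space property while invoking a Heegaard genus growth result for iterated torus knots to force the tunnel number to infinity. Set $K_0 = T(2,3)$, which is an L-space knot with $t(K_0) = 1$. Inductively, let $K_{n+1}$ be the $(p_{n+1}, q_{n+1})$-cable of $K_n$ for integers $p_{n+1}, q_{n+1} \geq 2$ chosen so that $p_{n+1}/q_{n+1} \geq 2g(K_n)-1$; Hedden's theorem then makes $K_{n+1}$ an L-space knot. Each $K_n$ is therefore an iterated torus knot produced by $n$ nontrivial cabling operations on $T(2,3)$, and each remains an L-space knot by construction.

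The critical step is a lower bound showing $t(K_n) \to \infty$. The exterior of $K_n$ is a graph manifold whose JSJ decomposition consists of $n+1$ nontrivial Seifert-fibered pieces (one torus knot exterior and $n$ cable spaces), and $t(K_n) + 1$ equals the Heegaard genus of this exterior. I would invoke the known calculations of Heegaard genera for iterated torus knot exteriors (via Schultens and Schultens--Weidmann, or related work on Heegaard genera of graph manifolds): when all cabling parameters satisfy $p_i, q_i \geq 2$, this Heegaard genus grows without bound in $n$. Given $N$, one then chooses $n$ large enough that $t(K_n) > N$, producing the desired L-space knot.

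The main obstacle is precisely this growth lower bound. An elementary inductive application of Morimoto--Sakuma's theorem shows only that $t(K_n) \geq 2$ for $n \geq 2$: a tunnel-number-one cabled knot must be a cable of a torus knot, but for $n \geq 2$ the companion $K_{n-1}$ is itself a nontrivial cable of a nontrivial knot and so is not a torus knot. Upgrading $t(K_n) \geq 2$ to $t(K_n) \to \infty$ requires the full graph-manifold Heegaard genus machinery rather than Morimoto--Sakuma alone; locating a clean citation (and verifying that our choice of $(p_i, q_i)$ meets the hypotheses of that reference) is the only nontrivial piece of the argument.
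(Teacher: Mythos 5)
Your argument is essentially the paper's: build iterated cables of a torus knot, use Hedden's cabling theorem at each stage to preserve the L-space property, and cite a growth result for the tunnel number of iterated torus knots in terms of the number of cabling iterations. The paper supplies that last ingredient---which you correctly identify as the only nontrivial step, since Morimoto--Sakuma alone gives merely $t\geq 2$---by citing Theorem~4.2 of Zupan's work on bridge spectra of iterated torus knots, rather than the Schultens/Schultens--Weidmann graph-manifold Heegaard genus machinery you propose; either source yields the needed unbounded growth for suitably chosen cabling parameters.
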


\begin{proof}
Sufficiently large cables of positive L-space knots are also positive L-space knots \cite{hedden-cable}. Thus it is enough to show that, generically, the tunnel number of an iterated cable of a torus knot (i.e.\ an iterated torus knot) grows with the number of cabling iterations.   Indeed, this follows from Theorem~4.2 of \cite{zupan-bridgespectra}. 
\end{proof}

The L-space knots with tunnel number greater than one constructed in Proposition~\ref{prop:tn} are all satellite knots.  Furthermore, we have not identified a non-satellite L-space knot in the literature without tunnel number one, though we expect there should be many.  Is this the case?
\begin{question}
Is there a non-satellite L-space knot with tunnel number greater than one?
\end{question}

Along these lines, classifying L-space knots among tunnel number one knots would be informative.   
\begin{question}
Which tunnel number one knots are L-space knots?
\end{question}
Note that our general strategy in section~\ref{sec:strategy} barely gets off the ground since fiberedness among tunnel number one knots is not yet well understood.
\begin{question} \quad
\begin{itemize}
\item[A.] Which tunnel number one knots in $S^3$ are fibered?  \cite{johnson-ldtblog}
\item[B.]
Which fibered tunnel number one knots in $S^3$ support the tight contact structure?
\end{itemize}
\end{question}

\section{Acknowledgements}
We would like to thank the organizers of the Low Dimensional Topology workshop of 2013 held at the Simons Center for Geometry and Physics where this work was initiated. The first author was partially supported by Simons Foundation grant 209184 and the second author was partially supported by NSF grant DMS-1148609.

\begin{footnotesize}
	\bibliographystyle{alpha}
	\bibliography{montybibly}
\end{footnotesize}

\end{document}